\documentclass[10pt]{amsart}
\usepackage[utf8]{inputenc}  
\usepackage{amsmath, amssymb, mathtools,amsfonts}
\usepackage[margin=3cm]{geometry}
\sloppy

\usepackage[pagewise]{lineno}
\usepackage{xcolor}
\usepackage{graphicx} 
\usepackage{tikz}
\usetikzlibrary{decorations.markings,backgrounds}
\usepackage{pgfplots}
\pgfplotsset{compat=1.18}
\usepackage{float}
\usepackage{caption,subcaption}
\usepackage{esint}
\usepackage{bbold}
\usepackage{dsfont}
\usepackage{bbm}
\usepackage{multicol}
\usepackage{hyperref}
\usepackage{cleveref}

\allowdisplaybreaks

\newtheorem{theo}{Theorem}
\newtheorem{prop}{Proposition}
\newtheorem{lemma}{Lemma}

\newtheorem{cor}{Corollary}

\newcommand{\R}{\mathbb{R}}	
\newcommand{\C}{\mathbb{C}}	
\newcommand{\N}{\mathbb{N}}	
\newcommand{\eps}{\varepsilon}	
\newcommand{\pa}{\partial}		
\newcommand{\Div}{\textrm{div}\,}	

\newcommand{\na}{\nabla}		

\newcommand{\chf}[1]{{\raisebox{3pt}{\Large $\chi$}}_{#1}}
\newcommand{\HH}{\mathbb{H}}
\newcommand{\dd}{\,\mathrm{d}}
\renewcommand{\d}{\mathrm{d}}
\newcommand{\Lea}{\mathcal{L}_{\varepsilon}^\alpha}
\newcommand{\Le}{\mathcal{L}_{\varepsilon}}
\newcommand{\uMrek}{u^{(M,\rho,\eps,\kappa)}}
\newcommand{\urek}{u^{(\rho,\eps,\kappa)}}
\newcommand{\uek}{u^{(\eps,\kappa)}}
\newcommand{\vek}{v^{(\eps,\kappa)}}
\newcommand{\uk}{u^{(\kappa)}}
\newcommand{\vk}{v^{(\kappa)}}

\title[Fractional cross-diffusion in a bounded domain]{Fractional cross-diffusion  in a bounded domain: existence, weak-strong uniqueness, and long-time asymptotics}

\author[N.~De Nitti]{Nicola De Nitti}
\address[N.~De Nitti]{Università di Pisa, Dipartimento di Matematica, Largo Bruno Pontecorvo 5, 56127 Pisa, Italy.}
\email[]{nicola.denitti@unipi.it}

\author[N.~Zamponi]{Nicola Zamponi}
\address[N.~Zamponi]{Universit\"{a}t Augsburg, Institut f\"ur Mathematik, Universit\"{a}tsstra\ss e 12a, 86159 Augsburg, Germany.}
\email[]{nicola.zamponi@uni-a.de}


\begin{document}

\begin{abstract}
		We study a fractional cross-diffusion system that describes the evolution of multi-species populations in the regime of large-distance interactions in a bounded domain. We prove existence and weak-strong uniqueness results for the initial-boundary value problem and analyze the convergence of the solutions to equilibrium via relative entropy methods.
\end{abstract}

\maketitle

\section{Introduction}
\label{sec:intro}

The study of dispersal strategies and the comparison between local and nonlocal diffusive behaviors of interacting populations have recently attracted great attention (see, e.\,g., \cite{MR3082317, MR3590678, viswanathan1996levy, humphries2010environmental} and references therein). In particular, we shall focus on the  following \emph{fractional cross-diffusion system} modeling the dynamics of interacting \emph{multi-species populations}:
\begin{align}
&\label{1} \pa_t u_i = \Div \sum_{j=1}^{n} a_{ij}u_i\na (-\Delta)^{-\frac{1-\beta}{2}}u_j, && (t,x) \in  (0,+\infty)\times  \Omega,\\
&\label{1.ic}  u_i(0,x) = u_{0,i}(x),  && x \in \Omega, \\ 
&\label{1.bc}
 \nu\cdot\Big(\sum_{j} a_{ij} u_i(t,x)\na (-\Delta)^{-\frac{1-\beta}{2}}u_j(t,x)\Big)  = 0, && (t,x) \in (0,+\infty) \times \partial \Omega, 
\end{align}
for $i\in\{1,\ldots,n\}$. 

The equations are to be solved with respect to $u : (0,+\infty) \times \Omega\to [0,\infty)^n$. Here $\Omega\subset\R^d$ is a bounded domain with smooth boundary, $\nu$ is the outward normal vector to $\partial \Omega$,  $d\in \{1, 2, 3\}$, $0<\beta<1$, $a_{ij}\geq 0$ for $i,j\in\{1,\ldots,n\}$ are constants. We define the \emph{inverse (spectral) fractional Laplacian} $(-\Delta)^{-s}$, $0<s<1$, via its \emph{spectral decomposition}  as follows:
\begin{align}\label{frac.L}
(-\Delta)^{-s}\phi \coloneqq  \sum_{k=1}^{\infty}\lambda_k^{-s}(\psi_k,\phi)_{L^2(\Omega)}\psi_k,\qquad\text{ for all } \phi\in H^2(\Omega),
\end{align}
where $(\cdot,\cdot)_{L^2(\Omega)}$ represents the $L^2(\Omega)$ scalar product, and $(\psi_k)_{k\in\N}\subset L^2(\Omega)$ is a complete orthonormal system for $L_0^2(\Omega)\equiv \{ u\in L^2(\Omega):~\int_\Omega u \dd x =0 \}$ composed of eigenfunctions of $-\Delta$ (with homogeneous Neumann boundary conditions) with eigenvalues $(\lambda_k)_{k\in\N}\subset (0,+\infty)$:
\begin{align}\label{eigenf}
\begin{cases} 
-\Delta\psi_k = \lambda_k\psi_k & x \in \Omega,\\
\partial_\nu \psi_k = 0 & x\in \pa\Omega,
\end{cases} \qquad k\geq 1.
\end{align}

We remark that considering zero-average subset of $L^2$ is needed to define the negative powers of the Neumann Laplacian and recall that then the eigenvalues $\lambda_k$, $k\geq 1$, are all positive and form an increasing sequence which tends to $\infty$ as $k\to\infty$.

Equivalently, the following \emph{heat semigroup representation} of the Neumann spectral fractional Laplacian (and its inverse defined in \eqref{frac.L}) holds:
\begin{align}\label{eq:sfl_sg}
(-\Delta)^{s}u(x) 
&= \frac{s}{\Gamma(1-s)} \int_0^{+\infty} \left(u(x) -e^{t\Delta} u(x)\right) \, \frac{\d t}{t^{1+s}}&&\text{ for all } u\in H^{2s}(\Omega),\\
\label{eq:sfl_sg.inv}
(-\Delta)^{-s}u(x) &= \frac{1}{\Gamma(s)} \int_0^\infty e^{t\Delta}u(x) \frac{\dd t}{t^{1-s}}&&\text{ for all } u\in L^2_0(\Omega),
\end{align}
where 
$e^{t\Delta}u(x)$ is the solution of the boundary value problem for the heat equation 	 
\begin{align}\label{eq:heat}
\begin{cases}
\partial_t w(t,x) = \Delta w(t,x), & (t,x) \in (0,+\infty)\times \Omega, \\
\partial_\nu w(t,x) = 0, & (t,x) \in  (0,+\infty) \times \partial \Omega, \\
w(0,x) = u(x), & x \in \Omega.
\end{cases}
\end{align}
Formula \eqref{eq:sfl_sg} is contained in \cite{MR4080918}. On the other hand, for $s<0$, the integral in \eqref{eq:sfl_sg} is not convergent since $t^{-1-s}$ is not integrable at $\infty$ while the difference $u(x) -e^{t\Delta} u(x)$ tends to $u(x) - \fint_\Omega u \dd x \neq 0$ (unless $u$ is constant); this motivates formula \eqref{eq:sfl_sg.inv}, which can be verified by projecting it along a complete orthonormal system of $L^2_0(\Omega)$ composed by  eigenfunctions of $-\Delta$.

In what follows, we shall assume that the matrix $(a_{ij})_{i,j\in\{1,\ldots,n\}}$ satisfies the  \emph{detailed balance condition}, i.\,e., 
\begin{align}\label{detbal}
\exists\pi_1,\ldots,\pi_n>0:\quad \pi_i a_{ij} = \pi_j a_{ji},\quad i,j\in\{1,\ldots,n\},
\end{align}
and that its spectrum is entirely contained in the half-plane $\{z\in\C:~ \Re(z)>0\}$. Under these assumptions, the matrix $\pi_i a_{ij}$ is symmetric and positive definite. Furthermore, system \eqref{1}--\eqref{1.bc} admits the following \emph{energy functional} (see \cite{MR3350617,MR4199391} for further details):
\begin{align}\label{en}
H[u] \coloneqq  \int_\Omega \sum_{i=1}^n \pi_i u_i\log u_i \dd x.
\end{align}
Indeed, we have 
\begin{align}\label{en.in}
\frac{\d}{\d t}H[u] + c_0\int_\Omega\sum_{i=1}^n |\na (-\Delta)^{-\frac{1-\beta}{4}}u_i|^2 \dd x\leq 0,\qquad t>0,
\end{align}
where $c_0>0$ is the smallest eigenvalue of the matrix $(\pi_i a_{ij})_{i,j=1,\ldots n}$ (see Theorem \ref{thm:ex} for further details).

We will prove global-in-time existence of weak solutions to \eqref{1}--\eqref{1.bc}. 

We  stress the fact that the boundary condition \eqref{1.bc} is to be interpreted in light of the weak formulation of \eqref{1}--\eqref{1.bc} (see Theorem \ref{thm:ex} below). 
The choice of the no-flux boundary conditions \eqref{1.bc} ensures mass conservation (as the constant test function $\phi\equiv 1$ can be chosen in the weak formulation of \eqref{1}--\eqref{1.bc}). 

Moreover, given the definition of the fractional Laplacian \eqref{frac.L} and the eigenfunctions \eqref{eigenf}, the boundary condition \eqref{1.bc} is automatically satisfied by any smooth enough function $u$: indeed, we formally compute 
\begin{align*}
 \nu\cdot\Big(\sum_{j} a_{ij} u_i(t,x)\na (-\Delta)^{-\frac{1-\beta}{2}}u_j(t,x)\Big)  &= \sum_{j} a_{ij} u_i(t,x)\nu \cdot \na (-\Delta)^{-\frac{1-\beta}{2}}u_j(t,x)
 \\&= \sum_{j} a_{ij} u_i(t,x)\nu \cdot \Big( \na \sum_{k=1}^{\infty}\lambda_k^{-\frac{1-\beta}{2}}(\psi_k,u_j(t,\cdot))_{L^2(\Omega)}\psi_k(x) \Big)
  \\&= \sum_{j} a_{ij} u_i(t,x)  \sum_{k=1}^{\infty}\lambda_k^{-\frac{1-\beta}{2}}(\psi_k,u_j(t,\cdot))_{L^2(\Omega)}\partial_\nu \psi_k(x) \\ & = 0, 
\end{align*} 


In $\mathbb R^d$, the system 
\begin{align}
\label{eq:cdRn}&  \pa_t u_i + \sigma_i (-\Delta)^\alpha u_i -  \Div \sum_{j=1}^{n} a_{ij}u_i\na (-\Delta)^{-\frac{1-\beta}{2}}u_j = 0, && (t,x) \in  (0,+\infty)\times \R^d,\\
\label{eq:cdRn-ic}&   u_i(0,x) = u_{0,i}(x),  && x \in \R^d
\end{align}
has been derived in \cite{MR4346700} as the \emph{many-particle limit} of the following \emph{interacting particle system driven by Lévy noise}:
$$\d X_{i}^{k, N}(t)=-\sum_{j=1}^{n} \frac{1}{N} \sum_{\ell=1}^{N} a_{i j} \nabla(-\Delta)^{(\beta-1) / 2} V_{N}\left(X_{i}^{k, N}(t)-X_{j}^{\ell, N}(t)\right) \dd t+\sqrt{2 \sigma_{i}} \dd L_{i}^{k}(t),$$
where $i\in \{1, \ldots, n\}$ and $k\in \{1, \ldots, N\}$, $X_{i}^{k, N}(t)$ is the position of the $k$-th particle of species $i$ at time $t$, $V_{N}$ is a suitably chosen potential function, and $L_{i}^{k}$ is a Lévy process of index $\alpha \in(0,1)$.

The global-in-time existence of strong solutions of \eqref{eq:cdRn}--\eqref{eq:cdRn-ic} is proved in  \cite{MR4346700} for sufficiently small initial data in $H^{s}\left(\mathbb{R}^{d}\right)$ (with $s>d / 2$) in the regime $2 \alpha>\beta+1$, i.\,e., when self-diffusion dominates cross-diffusion; on the other hand, in \cite{MR4401974}, global-in-time existence of weak solutions is proved without any smallness assumption on the initial data and for all values of $\alpha, \beta \in(0,1)$ (also allowing for the case without self-diffusion, namely $\sigma_{i}=0$, on which we focus in the present paper). 

The convergence to equilibrium for reaction-diffusion systems with linear diffusion has been studied extensively (see, e.\,g., \cite{MR2217853} and references therein); in the case of nonlinear diffusion or cross-diffusion, some results are available for diffusion of porous medium type \cite{MR4142243}, Maxwell--Stefan diffusion \cite{MR4064194}; reaction-diffusion systems \cite{MR3785634}, and reaction-cross-diffusion systems with strongly growing complex balanced reactions coming from mass-action kinetics \cite{MR3862716}. 

However, for the case of fractional cross-diffusion, the study of the initial-boundary-value problem seems to be widely open. In the present paper, we address this gap in the literature, showing the existence of a weak solution of \eqref{1}--\eqref{1.bc}, a weak-strong uniqueness principle, and the convergence to equilibrium as time goes to infinity. 

The main difficulty of the problem under consideration lies in the simultaneous presence and intertwinement of cross-diffusion and fractional diffusion in Eq.~\eqref{1}. This fact leads to a very rigid structure for system \eqref{1}--\eqref{1.bc}, which critically limits the analytical techniques that can be effectively applied to study the problem.
In \cite{MR4401974} a similar problem was considered, with similar analytical issues, but in comparison to \cite{MR4401974}, where only an existence analysis is performed, we study existence of weak solutions, long-time behavior and weak-strong uniqueness of solutions for \eqref{1}--\eqref{1.bc}. An additional novelty of our work is the fact that we consider a problem on a bounded domain and work with the spectral fractional Laplacian, a framework which (as already mentioned) has not yet been considered in the literature.

For the case of one component (namely, when the system reduces to a porous medium equation with nonlocal pressure), more results are available (see, e.\,g., \cite{zbMATH06101958,zbMATH06552524,zbMATH07773346,zbMATH06759326}). We refer, in particular, to \cite{zbMATH06482496,MR3916644}, where the case of the spectral fractional Laplacian is treated.

To prove the existence of solution, we  combine the approach of \cite{MR3916644} with (an adaptation of) the regularization strategy of \cite{MR4401974}. 
Precisely, we carefully design an elaborated approximation scheme which is adapted to the rigid fractional cross-diffusion structure of the system, while we obtain compactness for the approximating sequence by extending the usual results on Sobolev embeddings and integral inequalities to the setting of fractional Sobolev spaces defined via the spectral fractional Laplacian on bounded domains.

In the proof of weak-strong uniqueness and convergence to equilibrium, a pivotal role is played by the relative entropy method. This technique was introduced by DiPerna (see \cite{MR513809,MR523630}) and Dafermos (see \cite{MR546634,doi:10.1080/01495737908962394}) in order to estimate the $L^2$-distance between two solutions of hyperbolic systems of conservation laws, one of both being smooth or at least not too weak. We refer to \cite{MR2909912,MR3729430} for further applications of this method to fluid dynamics (in particular, to the Navier--Stokes system).
In this work we consider the relative entropy between a standard weak solution $u$ to the system and a hypothetical strong solution $v$, with both solutions coinciding at initial time, and estimate its time derivative (``entropy dissipation'') via a fractional Gagliardo--Nirenberg--Sobolev inequality (proved in the paper). While the basic idea of the method is not new, its successful application in the context of fractional cross-diffusion equations is novel and nontrivial, as it requires the extension of the standard embeddings and functional inequalities to the setting of (spectral) fractional Sobolev spaces, and it involves working with a rigid structure and with pseudo-differential operators for which the simple properties of classical (and weak) derivatives (e.\,g.,~chain rule, differentiation of products etc) do not apply. The weak solutions that we consider (which we obtain from the existence theorem) are also weaker than the standard ones, being $L^2(0,T; H^\beta(\Omega))$ with $0<\beta<1$ instead of $L^2(0,T; H^1(\Omega))$.

\subsection{Outline of the paper}
\label{ssec:outline}

The paper is organized as follows. In Section \ref{sec:main}, we state our three main results: existence of global weak solutions for \eqref{1}--\eqref{1.bc}, weak-strong uniqueness, and long-time behavior. In Section \ref{sec:prelim}, we introduce the notation and preliminary results that are needed. The main results are then proved in Section \ref{sec:thm-ex}, \ref{sec:thm-wsuniq}, and \ref{sec:thm-lt} respectively.

\section{Main results}
\label{sec:main}

Our first main result concerns the existence of weak solutions to \eqref{1}--\eqref{1.bc}. 

\begin{theo}[Global existence of weak solutions]\label{thm:ex}
	Let us assume that the initial data $(u_{0,1}, \dots, u_{0,n}) \eqqcolon u_0 : \Omega\to [0,+\infty)^n$ satisfies $\int_\Omega u_i|\log u_i|\dd x < \infty$ for $i\in\{1,\ldots,n\}$. Then
	system \eqref{1}--\eqref{1.bc} has a weak solution $u : \Omega\times (0,+\infty)\to\R^n$ with non-negative components satisfying, for every $T>0$,
	\begin{align*}
	u_i\in L^\infty((0,T); L^1(\Omega)),\quad
	\nabla (-\Delta)^{-\frac{1-\beta}{4}}u_i\in L^2((0,T); L^2(\Omega)), \quad
	\pa_t u_i\in L^r((0,T); W^{1,r}(\Omega)'),
	\end{align*}
	for $i\in\{1,\ldots,n\}$, with a suitable $r>1$, and the weak formulation of \eqref{1}--\eqref{1.bc},
	\begin{align*}
	\int_0^T\langle\pa_t u_i,\phi\rangle \dd t + \sum_{j=1}^n a_{ij}\int_0^T\int_\Omega \na\phi\cdot u_i\na (-\Delta)^{-\frac{1-\beta}{2}}u_j \dd x \dd t = 0,\quad i\in\{1,\ldots n\},\\
	u_i(t,\cdot)\to u_{0,i}\quad\mbox{strongly in $C^0([0,T], [W^{1,r}(\Omega)]')$ as $t\to 0$,}\quad i\in\{1,\ldots n\},
	\end{align*}
	for every $\phi\in L^{r}((0,T); W^{1,r}(\Omega))$, where $\langle\cdot,\cdot\rangle$ is the duality product between $[W^{1,r}(\Omega)]'$ and $W^{1,r}(\Omega)$. 
	
Moreover, $u$ satisfies the following \emph{entropy inequality}: there exists a constant $C>0$ such that 
\begin{align}\label{en.in.i}
H[u(t,\cdot)] + c_0\int_0^t\int_\Omega\sum_{i=1}^n |\na (-\Delta)^{-\frac{1-\beta}{4}}u_i|^2 \dd x\leq C,\qquad t>0,
\end{align}	
where $c_0>0$ is the smallest eigenvalue of the matrix $(\pi_i a_{ij})_{i,j=1,\ldots n}$. 
\end{theo}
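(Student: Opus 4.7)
My plan is to follow a four-level approximation scheme $(M,\rho,\eps,\kappa)$ in the spirit of \cite{MR4401974} (adapted to the spectral Neumann setting of \cite{MR3916644}). At the innermost level I take a Faedo--Galerkin ansatz in the finite-dimensional space spanned by the constant function together with the first $M$ Neumann eigenfunctions $\psi_1,\dots,\psi_M$ of $-\Delta$. Outside the Galerkin projection I add three regularizations: an artificial self-diffusion $\kappa(I-\Delta)$ to restore parabolicity and provide a gradient estimate independent of the entropy, a truncation at level $\rho$ of the concentration appearing in the flux coefficient $u_i$ (replacing $u_i$ there by some $T_\rho(u_i)\in[0,\rho]$) to keep the nonlinearity bounded, and an $\eps$-shift $u_i\mapsto u_i+\eps$ inside the logarithmic entropy so that an entropy test function is available and strict positivity of the Galerkin iterate is obtained. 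The resulting finite-dimensional system is a locally Lipschitz ODE; global solvability on any $[0,T]$ follows from the uniform a priori bounds derived next.

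\textbf{Entropy estimate.} Testing the approximate equation with $\pi_i(\log(u_i+\eps)+1)$ and using that $(-\Delta)^{-s}$ is self-adjoint on $L^2_0(\Omega)$, together with the symmetry $\pi_ia_{ij}=\pi_ja_{ji}$ from \eqref{detbal}, the cross-diffusion contribution rearranges to the quadratic form
\begin{align*}
\sum_{i,j=1}^n\pi_i a_{ij}\int_\Omega \na (-\Delta)^{-\frac{1-\beta}{4}}u_i\cdot\na(-\Delta)^{-\frac{1-\beta}{4}}u_j\,\dd x
\geq c_0\sum_{i=1}^n\int_\Omega|\na(-\Delta)^{-\frac{1-\beta}{4}}u_i|^2\,\dd x,
\end{align*}
by positive-definiteness of $(\pi_ia_{ij})$. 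The $\kappa$-term yields in addition $\kappa\sum_i\|\na\sqrt{u_i+\eps}\|_{L^2}^2$, which will be discarded in the limit. Together with the mass conservation $\int_\Omega u_i\,\dd x=\int_\Omega u_{0,i}\,\dd x$ (obtained by testing with $\phi\equiv1$), this gives uniform control of $\sum_i\pi_i u_i\log u_i$ in $L^\infty(0,T;L^1(\Omega))$ and of $\na(-\Delta)^{-(1-\beta)/4}u_i$ in $L^2(0,T;L^2(\Omega))$, which by Parseval in the eigenbasis $(\psi_k)$ is equivalent to $u_i\in L^2(0,T;H^{(1+\beta)/2}(\Omega))$ (modulo the constant mode, which is controlled by mass).

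\textbf{Compactness and passage to the limit.} From the entropy estimate and Sobolev embedding (using $d\leq 3$), the flux $u_i\na(-\Delta)^{-(1-\beta)/2}u_j$ is bounded in $L^{r'}(0,T;L^{r'}(\Omega))$ for some $r>1$, hence the equation yields $\pa_t u_i\in L^r(0,T;W^{1,r}(\Omega)')$ uniformly. The Aubin--Lions lemma then produces strong compactness of $u_i$ in, say, $L^2((0,T)\times\Omega)$. I then pass to the limit successively in $M\to\infty$, $\rho\to\infty$, $\eps\to0$, and $\kappa\to0$: in each step, strong convergence of the factor $u_i$ together with weak convergence of $\na(-\Delta)^{-(1-\beta)/2}u_j$ in $L^2$ allow identifying the nonlinear flux in the weak formulation, non-negativity is preserved, the initial datum is attained by standard continuity in $W^{1,r}(\Omega)'$, and \eqref{en.in.i} is recovered by weak lower semicontinuity of the dissipation and the convex entropy density $s\mapsto s\log s$.

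\textbf{Main obstacle.} The delicate step is the passage to the limit in the cross-diffusion flux in the complete absence of a genuine self-diffusion bound on $\na u_i$: the only gradient information is the fractional one, $\na(-\Delta)^{-(1-\beta)/4}u_i\in L^2$. Extracting from it enough Sobolev regularity on $u_i$ (to apply Aubin--Lions and to make $u_i\na(-\Delta)^{-(1-\beta)/2}u_j$ belong to the right dual of the test-function space) requires a careful bookkeeping of the spectral order of $(-\Delta)^{-s}$, exploiting in particular $d\leq 3$ so that $H^{(1+\beta)/2}(\Omega)\hookrightarrow L^p(\Omega)$ for some $p>2$, and is the point where the sharp interplay between the parameters $(M,\rho,\eps,\kappa)$ has to be monitored.
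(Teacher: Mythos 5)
Your overall strategy (multi-level regularization, entropy estimate, Aubin--Lions, passage to the limit) is the same as the paper's, but there are two genuine gaps and one mismatch in the regularizations.

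First, the inner level. The paper does \emph{not} use a Galerkin ansatz: it proves well-posedness of the approximate problem by a Banach fixed-point argument in $C^0([0,T];L^2)\cap L^2(0,T;H^1)$, so that the approximate solution lives in $L^2(0,T;H^1)$ with $\partial_t u_i\in L^2(0,T;H^1(\Omega)')$, and a Lipschitz nonlinear function of it, such as $h'_{M,\eta}(u_i)$, is a legitimate test function. Your Galerkin scheme runs into a concrete obstacle exactly at the step you label \textbf{Entropy estimate}: testing with $\pi_i(\log(u_i^M+\eps)+1)$ in a Galerkin framework really means testing with the Galerkin projection $\pi_M\log(u_i^M+\eps)$, and while the terms $(\partial_t u^M_i,\pi_M\log(\cdot))$ and $(\Delta u^M_i,\pi_M\log(\cdot))$ simplify because $\partial_t u^M_i$ and $\Delta u^M_i$ lie in the Galerkin space, the cross-diffusion term does not: $\operatorname{div}(T_\rho(u^M_i)\nabla(-\Delta)^{(\beta-1)/2}u^M_j)$ is not in the span of $\psi_1,\dots,\psi_M$, so the projection cannot be removed, and the cancellation $u_i\nabla\log u_i=\nabla u_i$ that turns the cross term into the quadratic form $\sum\pi_ia_{ij}\int\nabla(-\Delta)^{-(1-\beta)/4}u_i\cdot\nabla(-\Delta)^{-(1-\beta)/4}u_j$ is destroyed. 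You would need to quantify and control the projection error, which you do not address. Related to this, the $\eps$-shift inside the logarithm does not give positivity of the Galerkin iterate (finite linear combinations of Neumann eigenfunctions change sign); it merely keeps the entropy density finite.

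Second, and more structurally, you omit the zero-average quadratic correction $g_\rho[u]$ that the paper inserts as a source term $+\kappa g_\rho[u_i]$. This term is not cosmetic: tested against the entropy, it produces the $L^1(Q_T)$ bound on $\kappa\,u_i^2(\log u_i)_+$ appearing in \eqref{ei.eps.1}, which is exactly what the paper uses in Step~4c to upgrade compactness to strong convergence $u_i^{(\eps,\kappa)}\to u_i^{(\kappa)}$ in $L^2(Q_T)$ before letting $\eps\to0$. Without it, at fixed $\kappa>0$ the only integrability you have is $u\log u\in L^\infty L^1$ and $\nabla\sqrt u\in L^2(Q_T)$, which by Gagliardo--Nirenberg gives roughly $u\in L^{(d+2)/d}(Q_T)$ ($=L^{5/3}$ for $d=3$); that is not enough to pass to the limit in the quadratic nonlinearity $u_i\nabla(-\Delta)^{-(1-\beta)/2}u_j$. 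Finally, your $\eps$-regularization (shifting the argument of the logarithm) is unrelated to the paper's $\eps$-regularization, which truncates the heat-semigroup integral to $\mathcal L_\eps^{(\beta+1)/4}$ in order to replace $(-\Delta)^{(\beta-1)/2}$ by the bounded operator $(-\Delta)^{-1}\mathcal L_\eps^{(\beta+1)/4}\mathcal L_\eps^{(\beta+1)/4}$ at the fixed-point level; in a Galerkin scheme this particular regularization is indeed unnecessary, but the missing $g_\rho$ and the broken entropy test remain genuine gaps.
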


Secondly, we prove a weak-strong uniqueness principle: any weak solution coincides with a classical solution (or, more accurately, with a weak solution whose logarithmic derivative satisfies further integrability properties) \textcolor{red}{if the latter exists}. We remark that the existence of such stronger notion of solutions (for short times) remains an open problem for the system \eqref{1}--\eqref{1.bc}.

\begin{theo}[Weak-strong uniqueness]\label{thm:wsuniq}
	Let us assume that \eqref{1}--\eqref{1.bc} has a weak solution $v$ satisfying 
	\begin{align}\label{hp.wsuni}
	\exists q_1, q_2>1,\quad
    \frac{d+1+\beta}{q_1} + \frac{d}{q_2} < 1:\quad
	\nabla\log v_i\in L^{q_1}((0,T); L^{q_2}(\Omega)),
    \quad i\in\{1,\ldots,n\},
	\end{align}
	for every $T>0$.
	Then every weak solution $u$ to \eqref{1}--\eqref{1.bc} (in the sense of Theorem \ref{thm:ex}) coincides with $v$ a.\,e.~in $\Omega$, for $t>0$.
\end{theo}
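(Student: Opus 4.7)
The plan is to prove the theorem by the \emph{relative entropy method}. Define the relative entropy
\begin{equation*}
\mathcal{E}[u|v](t) := \sum_{i=1}^n \pi_i \int_\Omega \left(u_i \log\frac{u_i}{v_i} - u_i + v_i\right)(t,x) \dd x,
\end{equation*}
which is non-negative, vanishes at $t = 0$ (since $u(0,\cdot) = v(0,\cdot) = u_0$), and, by a Csisz\'ar--Kullback--Pinsker inequality combined with mass conservation, bounds $\sum_i \|u_i - v_i\|_{L^1(\Omega)}^2$ from below. It thus suffices to establish an integral inequality of the form $\mathcal{E}[u|v](t) \leq \int_0^t F(s)\,\mathcal{E}[u|v](s)\dd s$ with $F \in L^1_{\mathrm{loc}}(0,+\infty)$, whence Gronwall's lemma yields $\mathcal{E}[u|v] \equiv 0$, i.e.\ $u = v$ a.e.

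Setting $w_i := u_i - v_i$, the crux is a \emph{relative entropy identity}. Formally testing the equation for $u$ by $\pi_i \log(u_i/v_i)$ and the equation for $v$ by $-\pi_i (u_i/v_i - 1)$, summing over $i$, and integrating by parts (using \eqref{1.bc}), one arrives at
\begin{equation*}
\frac{\d}{\d t}\mathcal{E}[u|v] = -\sum_{i,j=1}^n \pi_i a_{ij}\int_\Omega \na(-\Delta)^{-\frac{1-\beta}{2}} w_j \cdot \na w_i \dd x + \mathcal{I}[u,v],
\end{equation*}
where $\mathcal{I}[u,v] := \sum_{i,j} \pi_i a_{ij}\int_\Omega w_i\,\na\log v_i \cdot \na(-\Delta)^{-\frac{1-\beta}{2}} w_j \dd x$. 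The spectral self-adjointness of $(-\Delta)^s$ on $L_0^2(\Omega)$ (recalling that $w_i$ has zero mean by mass conservation) then gives
\begin{equation*}
\int_\Omega \na(-\Delta)^{-\frac{1-\beta}{2}} w_j \cdot \na w_i \dd x = \int_\Omega \na(-\Delta)^{-\frac{1-\beta}{4}} w_j \cdot \na(-\Delta)^{-\frac{1-\beta}{4}} w_i \dd x,
\end{equation*}
which, combined with the positive-definiteness of $(\pi_i a_{ij})$ with smallest eigenvalue $c_0 > 0$, extracts the coercive contribution $-c_0\sum_i \|\na(-\Delta)^{-\frac{1-\beta}{4}} w_i\|_{L^2(\Omega)}^2$.

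The bad term $\mathcal{I}[u,v]$ is controlled by H\"older's inequality in space-time combined with Gagliardo--Nirenberg interpolation: place $\na\log v_i$ in $L^{q_1}_t L^{q_2}_x$ via \eqref{hp.wsuni} and distribute the complementary integrability between $w_i$ and $\na(-\Delta)^{-\frac{1-\beta}{2}} w_j$, both of which interpolate between the mass bound $L^\infty_t L^1_x$ and the dissipation norm $L^2_t H^{(1+\beta)/2}_x$ inherited from Theorem~\ref{thm:ex} (noting that $\na(-\Delta)^{-\frac{1-\beta}{2}}$ is a strictly lower-order operator than $(-\Delta)^{(1+\beta)/4}$ when $\beta<1$, so it also interpolates). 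The hypothesis \eqref{hp.wsuni} is equivalent to the Prodi--Serrin-type scaling $\frac{d}{q_2} + \frac{d+1+\beta}{q_1} = \frac{1+\beta}{2}$, which is exactly the relation making Young's inequality yield, after integration in time,
\begin{equation*}
\int_0^t |\mathcal{I}[u,v]|\dd s \leq \frac{c_0}{2}\sum_i \int_0^t \|\na(-\Delta)^{-\frac{1-\beta}{4}} w_i\|_{L^2(\Omega)}^2 \dd s + \int_0^t F(s)\,\mathcal{E}[u|v](s)\dd s,
\end{equation*}
where $F \in L^1_{\mathrm{loc}}$ depends on $\|\na\log v\|_{L^{q_1}_t L^{q_2}_x}$ and the conserved masses. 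Absorbing the dissipative term into the left-hand side of the integrated entropy identity and invoking Gronwall closes the argument.

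The principal technical obstacle is the rigorous justification of the relative entropy identity, since the candidate test function $\pi_i \log(u_i/v_i)$ is in general neither bounded (if $u_i$ or $v_i$ may vanish) nor in the admissible class $L^r(0,T;W^{1,r}(\Omega))$ of Theorem~\ref{thm:ex}. The standard remedy is to work with the regularized test function $\pi_i \log\bigl(\tfrac{u_i+\delta}{v_i+\delta}\bigr)$ composed with a temporal mollification, and to pass $\delta \downarrow 0$ using lower-semicontinuity for the dissipation and uniform integrability (inherited from the entropy bound \eqref{en.in.i}) for the remaining terms; the extra regularity of $v$ granted by \eqref{hp.wsuni} is what allows the critical terms involving $\na\log v_i$ to be controlled in this limit.
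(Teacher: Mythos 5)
Your proposal is correct and follows essentially the same route as the paper: the same relative entropy functional, the same use of detailed balance and positive-definiteness to extract the dissipative term $c_0\sum_i\|\nabla(-\Delta)^{-(1-\beta)/4}(u_i-v_i)\|_{L^2}^2$, the same Prodi--Serrin-type scaling \eqref{hp.wsuni} closing the estimate via Gagliardo--Nirenberg, Csisz\'ar--Kullback--Pinsker, and Gronwall, and the same acknowledgment that the identity must be justified by regularization. The only minor cosmetic difference is in how the bad term is handled: the paper moves $(-\Delta)^{-(1-\beta)/4}$ onto the factor $w_i\nabla\log v_i$ by self-adjointness and then applies Young's inequality to isolate $\int(u_i-v_i)^2|\nabla\log v_i|^2$, rather than interpolating both $w_i$ and $\nabla(-\Delta)^{-(1-\beta)/2}w_j$ as you sketch, but this does not change the substance of the argument.
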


Finally, we study the long-time asymptotic behavior of the solutions to \eqref{1}--\eqref{1.bc}.

\begin{theo}[Long-time behaviour]\label{thm:lt}
Let us assume that $\Omega$ is connected. Then the weak solutions to \eqref{1}--\eqref{1.bc} (in the sense of  Theorem~\ref{thm:ex}) converge  strongly in $L^1(\Omega)$ towards $u^\infty \equiv \fint_\Omega u_0 \dd x$ with an exponential decay rate as $t\to\infty$.
\end{theo}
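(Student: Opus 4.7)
The plan is to prove exponential decay of the \emph{relative entropy}
\[
H[u\,|\,u^\infty] := \sum_{i=1}^n \pi_i \int_\Omega \!\left( u_i \log\frac{u_i}{u_i^\infty} - u_i + u_i^\infty \right) \dd x
\]
via an entropy--dissipation estimate, and then to transfer this decay to $L^1(\Omega)$ using the Csisz\'ar--Kullback--Pinsker (CKP) inequality. First I would observe that mass conservation (obtained by testing the weak formulation against $\phi\equiv 1$) forces $u_i^\infty = \fint_\Omega u_{0,i}\dd x$ and $\int_\Omega(u_i - u_i^\infty)\dd x = 0$, so $H[u(t)] - H[u^\infty] = H[u(t)\,|\,u^\infty]$; hence the entropy inequality \eqref{en.in.i} of Theorem~\ref{thm:ex}, in its natural time-localized form (obtainable from the weak-solution construction together with lower semicontinuity in time), becomes
\[
H[u(t)\,|\,u^\infty] + c_0 \int_s^t \mathcal D(\tau)\dd\tau \leq H[u(s)\,|\,u^\infty], \qquad 0\leq s\leq t,
\]
where $\mathcal D(\tau) := \sum_i \|\nabla(-\Delta)^{-(1-\beta)/4} u_i(\tau)\|_{L^2(\Omega)}^2$.

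The heart of the argument is to bound the relative entropy pointwise in time by the dissipation. Applying the elementary inequality $\log x \leq x-1$ at $x=u_i/u_i^\infty$ gives the pointwise bound
\[
u_i \log\frac{u_i}{u_i^\infty} - u_i + u_i^\infty \leq \frac{(u_i - u_i^\infty)^2}{u_i^\infty}, \qquad u_i\geq 0,
\]
whence $H[u\,|\,u^\infty] \leq C_1 \sum_i \|u_i - u_i^\infty\|_{L^2(\Omega)}^2$ with $C_1 := \max_{i:u_i^\infty>0} \pi_i/u_i^\infty$ (the degenerate case $u_i^\infty = 0$ forces $u_i\equiv 0$ by non-negativity and vanishing mass, and is trivial). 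For the second step, connectedness of $\Omega$ guarantees a positive Neumann spectral gap $\lambda_1>0$, and the spectral decomposition \eqref{frac.L} yields the \emph{fractional Poincar\'e inequality}
\[
\|u_i - u_i^\infty\|_{L^2(\Omega)}^2 \leq \lambda_1^{-(1+\beta)/2} \|\nabla(-\Delta)^{-(1-\beta)/4} u_i\|_{L^2(\Omega)}^2,
\]
the right-hand side being recognized via integration by parts as $\|(-\Delta)^{(1+\beta)/4}(u_i - u_i^\infty)\|_{L^2}^2$ (the constant part of $u_i$ being annihilated by the gradient). Chaining the two bounds produces $H[u\,|\,u^\infty] \leq C\, \mathcal D$ with $C := C_1/\lambda_1^{(1+\beta)/2}$.

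Plugged into the localized entropy inequality with $\kappa := c_0/C$ this becomes
\[
H[u(t)\,|\,u^\infty] + \kappa \int_s^t H[u(\tau)\,|\,u^\infty]\dd\tau \leq H[u(s)\,|\,u^\infty].
\]
Taking $s=0$ and setting $G(t) := \int_0^t H[u(\tau)\,|\,u^\infty]\dd\tau$, one has $G' + \kappa G \leq H[u_0\,|\,u^\infty]$, and the integral Gronwall lemma yields $H[u(t)\,|\,u^\infty] \leq H[u_0\,|\,u^\infty]\,e^{-\kappa t}$. Finally, the componentwise CKP inequality $\|u_i(t) - u_i^\infty\|_{L^1(\Omega)}^2 \leq 2|\Omega| u_i^\infty \pi_i^{-1} H[u(t)\,|\,u^\infty]$ translates this into the announced exponential decay in $L^1(\Omega)$.

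The main technical point I anticipate is justifying the time-localized form of the entropy inequality at the level of weak solutions (to run the Gronwall step with generic $s$), though everything can alternatively be closed using only the global-in-time form \eqref{en.in.i}, absorbing the initial value into a constant prefactor. Besides this, the connectedness hypothesis on $\Omega$ is essential: without it $\lambda_1 = 0$ and the spectral-gap step collapses.
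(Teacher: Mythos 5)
Your proposal is correct and follows essentially the same route as the paper: entropy--dissipation inequality, fractional Poincar\'e estimate (Lemma~\ref{lem.Poi} with $r=(1+\beta)/2$), a quadratic bound on the relative entropy, Gr\"onwall, and Csisz\'ar--Kullback--Pinsker. The one place where you are more explicit than the paper is the bound $H[u\,|\,u^\infty]\leq C\sum_i\|u_i-u_i^\infty\|_{L^2(\Omega)}^2$ via $\log x\leq x-1$: the paper jumps directly from the $L^2$ dissipation bound to the relative entropy without recording this elementary comparison, whereas you supply it, which is the right thing to do.

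Your remark about the time-localized form of the entropy inequality is also a genuine technical point: the paper performs the Gr\"onwall step on the differential inequality $\frac{\d}{\d t}H[u\,|\,u^\infty]\leq -C H[u\,|\,u^\infty]$ after a formal time derivative, which (as in the weak--strong uniqueness proof) should be understood as a computation at the approximate level that survives the passage to the limit. Your alternative --- working with the integral form $G'+\kappa G\leq H[u_0\,|\,u^\infty]$ where $G(t)=\int_0^t H[u(\tau)\,|\,u^\infty]\,\d\tau$ --- is a clean way to run Gr\"onwall directly from the global entropy inequality \eqref{en.in.i} without re-opening the approximation. Both yield the same exponential rate.
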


\section{Preliminary results}
\label{sec:prelim}

We start by presenting some technical results related to the spectral fractional Laplacian. For $0<r<1$ define the space
\begin{align*}
\HH^r(\Omega) \coloneqq  \{ u\in L^2_0(\Omega)~:~ \nabla (-\Delta)^{(r-1)/2}u\in L^2(\Omega) \},
\end{align*}
which can also be characterized (arguing similarly as \cite[Proposition  2.2, Claim (4)]{MR3397309}) as 
\begin{align*}
\HH^r(\Omega) = \{ u\in L^2_0(\Omega)~:~ \|u\|_{\HH^r(\Omega)}<\infty\},\quad
\|u\|_{\HH^r(\Omega)}^2 = \sum_{j=1}^\infty \lambda_j^r|(\psi_j,u)_{L^2(\Omega)}|^2.
\end{align*}
We start by proving an analogue version of the Sobolev's compact embedding theorem.
\begin{lemma}[Sobolev's embedding]\label{lem.spfrSob}
The embedding $\HH^r(\Omega)\hookrightarrow L^2(\Omega)$ is compact for every $r\in (0,1)$.
\end{lemma}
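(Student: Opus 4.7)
The plan is to exploit the spectral characterization
\[
\|u\|_{\HH^r(\Omega)}^2 = \sum_{j=1}^\infty \lambda_j^r|(\psi_j,u)_{L^2(\Omega)}|^2
\]
together with the fact, recalled in the introduction, that the eigenvalues $\lambda_j$ form an increasing sequence with $\lambda_j\to\infty$. This turns compactness of $\HH^r(\Omega)\hookrightarrow L^2(\Omega)$ into a Fourier-style tail estimate, entirely analogous to the compactness of the embedding $\ell^2_w\hookrightarrow\ell^2$ for a weight $w_j\to\infty$.

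Concretely, I would take a bounded sequence $(u^{(n)})_{n\in\N}\subset\HH^r(\Omega)$, say $\|u^{(n)}\|_{\HH^r(\Omega)}\le M$, and expand $u^{(n)}=\sum_{j=1}^\infty c_j^{(n)}\psi_j$ with $c_j^{(n)}=(\psi_j,u^{(n)})_{L^2(\Omega)}$. Since $\lambda_j>0$, the bound $\sum_j \lambda_j^r|c_j^{(n)}|^2\le M^2$ yields, for each fixed $j$, $|c_j^{(n)}|\le M\lambda_j^{-r/2}$. A diagonal extraction therefore produces a subsequence (not relabeled) and coefficients $c_j^*$ with $c_j^{(n)}\to c_j^*$ as $n\to\infty$ for every $j\ge 1$, and $|c_j^*|\le M\lambda_j^{-r/2}$. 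By Fatou's lemma applied to $\sum_j\lambda_j^r|c_j^*|^2$, the function $u^*:=\sum_j c_j^*\psi_j$ belongs to $\HH^r(\Omega)$ with $\|u^*\|_{\HH^r(\Omega)}\le M$.

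It remains to upgrade the pointwise (in $j$) convergence to strong $L^2$-convergence. By Parseval on $L^2_0(\Omega)$ and the monotonicity $\lambda_j\ge \lambda_N$ for $j>N$,
\[
\|u^{(n)}-u^*\|_{L^2(\Omega)}^2=\sum_{j=1}^N|c_j^{(n)}-c_j^*|^2+\sum_{j>N}|c_j^{(n)}-c_j^*|^2
\le \sum_{j=1}^N|c_j^{(n)}-c_j^*|^2+\frac{4M^2}{\lambda_N^r}.
\]
Given $\eps>0$, I first fix $N$ so large that $4M^2\lambda_N^{-r}<\eps/2$, which is possible because $\lambda_N\to\infty$; then, using the pointwise convergence of the finitely many coefficients $c_j^{(n)}\to c_j^*$ for $j=1,\dots,N$, I pick $n_0$ so that $\sum_{j=1}^N|c_j^{(n)}-c_j^*|^2<\eps/2$ for $n\ge n_0$. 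Hence $\|u^{(n)}-u^*\|_{L^2(\Omega)}^2<\eps$ for $n\ge n_0$, proving compactness.

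The argument has no real obstacle: the only structural input beyond orthonormality is $\lambda_j\to\infty$, which is guaranteed by the spectral properties of the Neumann Laplacian on $L^2_0(\Omega)$ recalled after \eqref{eigenf}. The mildly delicate point is the diagonal extraction and the justification that the weak limit $u^*$ still lies in $\HH^r(\Omega)$, but this is handled by Fatou's lemma applied to the series defining $\|\cdot\|_{\HH^r(\Omega)}$.
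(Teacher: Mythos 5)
Your proof is correct and follows essentially the same route as the paper: extract convergent Fourier coefficients by a diagonal argument, pass to a limit in $\HH^r(\Omega)$ via Fatou's lemma, and then split $\|u^{(n)}-u^*\|_{L^2(\Omega)}^2$ into a finite head plus a tail controlled by $\lambda_N^{-r}$ using $\lambda_j\to\infty$. The only cosmetic difference is that the paper records the uniform tail bound $\sup_n\sum_{j\ge k}|(\psi_j,u_n)|^2\le C\lambda_k^{-r}$ as a standalone display before the final estimate, whereas you inline it.
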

\begin{proof} Let $(u_n)_{n\in\N}\subset\HH^r(\Omega)$ be a bounded sequence in $\HH^r(\Omega)$. This means
\begin{align*}
\|u_n\|_{\HH^r(\Omega)}^2 = \sum_{j=1}^\infty \lambda_j^r|(\psi_j,u_n)_{L^2(\Omega)}|^2\leq C,\qquad n\in\N.
\end{align*}
Since $\lambda_j\leq \lambda_{j+1}$ for $j\geq 1$, it follows
\begin{align}\label{est.tail}
\sup_{n\in\N}\sum_{j=k}^\infty |(\psi_j,u_n)_{L^2(\Omega)}|^2\leq C\lambda_k^{-r}\to 0\quad\mbox{as }k\to\infty.
\end{align}
Being $\lambda_j\geq\lambda_1>0$ for $j\geq 1$, we have that $(u_n)_{n\in\N}$ is bounded in $L^2(\Omega)$; therefore, for every $j\in\N$, the sequence of real numbers $( (\psi_j,u_n)_{L^2(\Omega)} )_{n\in\N}$ is bounded and thus relatively compact in $\R$. A Cantor diagonal argument allows us to extract a subsequence (not relabeled) of $(u_n)_{n\in\N}$ such that
\begin{align}\label{cnv.Sob}
(\psi_j,u_n)_{L^2(\Omega)}\to \hat{u}_j\quad\mbox{as }n\to\infty,~~\mbox{for }j\geq 1.
\end{align}
Fatou's lemma yields
\begin{align*}
\sum_{j=1}^\infty \lambda_j^r|\hat{u}_j|^2 \leq\liminf_{n\to\infty}
\sum_{j=1}^\infty \lambda_j^r|(\psi_j,u_n)_{L^2(\Omega)}|^2 < \infty,
\end{align*}
which implies $u\equiv \sum_{j=1}^\infty \hat{u}_j\psi_j\in \HH^r(\Omega)$. Furthermore, 
\begin{align}\label{est.tail.2}
\sum_{j=k}^\infty |\hat{u}_j|^2\leq C\lambda_k^{-r}\to 0\quad\mbox{as }k\to\infty.
\end{align}
Let us now consider, for an arbitrary $k\geq 2$,
\begin{align*}
\|u_n - u\|_{L^2(\Omega)}^2 \leq \sum_{j=1}^{k-1}( (\psi_j,u_n)_{L^2(\Omega)} - \hat{u}_j )^2 + 
2\sum_{j=k}^\infty( |(\psi_j,u_n)_{L^2(\Omega)}|^2 + |\hat{u}_j|^2 ).
\end{align*}
From \eqref{est.tail}--\eqref{est.tail.2}, we then deduce
\begin{align*}
\limsup_{n\to\infty}\|u_n - u\|_{L^2(\Omega)}^2 \leq C\lambda_k^{-r}\to 0\quad\mbox{as }k\to\infty,
\end{align*}
yielding that $u_n\to u$ strongly in $L^2(\Omega)$. This concludes the proof.
\end{proof}

\begin{lemma}[Sobolev's embedding]\label{Sob.emb.spfr}
	Let $0<r<1$, $p^*=\frac{2d}{d-2r}$.
	The embedding $\HH^r(\Omega)\hookrightarrow L^p(\Omega)$ is compact for every $p\in [1,p^*)$.
\end{lemma}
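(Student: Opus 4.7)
The plan is to upgrade the compact embedding $\HH^r(\Omega)\hookrightarrow L^2(\Omega)$ of Lemma~\ref{lem.spfrSob} to a compact embedding into $L^p(\Omega)$ for every $p\in[1,p^*)$. The natural route is to first establish a continuous embedding $\HH^r(\Omega)\hookrightarrow L^q(\Omega)$ for every $q\in[1,p^*)$ and then combine it with the $L^2$-compactness via Hölder interpolation.

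For the continuous embedding, given $u\in\HH^r(\Omega)$, I set $v \coloneqq \sum_{j\ge 1}\lambda_j^{r/2}(\psi_j,u)_{L^2(\Omega)}\psi_j \in L^2_0(\Omega)$, which satisfies $\|v\|_{L^2(\Omega)} = \|u\|_{\HH^r(\Omega)}$ and $u = (-\Delta)^{-r/2}v$ (note $r/2\in(0,1/2)$ since $r<1$, so \eqref{eq:sfl_sg.inv} applies). The semigroup representation then gives
\begin{align*}
u(x) = \frac{1}{\Gamma(r/2)}\int_0^{\infty} t^{r/2-1}e^{t\Delta}v(x)\,\d t.
\end{align*}
For $t\in(0,1]$, the ultracontractivity of the Neumann heat semigroup on the bounded domain $\Omega$ yields $\|e^{t\Delta}v\|_{L^q(\Omega)} \le C\, t^{-\alpha}\|v\|_{L^2(\Omega)}$ with $\alpha = \tfrac{d}{2}(\tfrac{1}{2}-\tfrac{1}{q})$. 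A direct computation shows that $q<p^*$ is equivalent to $\alpha<r/2$, so the integrand $t^{r/2-1}\|e^{t\Delta}v\|_{L^q(\Omega)}$ is integrable on $(0,1)$. For $t\ge 1$, the zero-mean condition $v\in L^2_0(\Omega)$ combined with the spectral gap $\lambda_1>0$ of the Neumann Laplacian gives $\|e^{t\Delta}v\|_{L^2(\Omega)} \le e^{-\lambda_1 t}\|v\|_{L^2(\Omega)}$; applying ultracontractivity on $[t-\tfrac12,t]$ upgrades this to $\|e^{t\Delta}v\|_{L^q(\Omega)}\le C\,e^{-\lambda_1(t-1/2)}\|v\|_{L^2(\Omega)}$, which is integrable against $t^{r/2-1}$ on $[1,\infty)$. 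Summing both contributions yields $\|u\|_{L^q(\Omega)} \le C\|u\|_{\HH^r(\Omega)}$.

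For the compactness, let $(u_n)_{n\in\N}\subset\HH^r(\Omega)$ be bounded. Lemma~\ref{lem.spfrSob} gives a subsequence (not relabelled) with $u_n\to u$ strongly in $L^2(\Omega)$. If $p\le 2$, since $\Omega$ is bounded, strong $L^p$-convergence follows at once. If $p\in(2,p^*)$, I pick any $q\in(p,p^*)$; by the previous step, $(u_n-u)_n$ is bounded in $L^q(\Omega)$, and Hölder's interpolation inequality
\begin{align*}
\|u_n-u\|_{L^p(\Omega)} \le \|u_n-u\|_{L^2(\Omega)}^{\theta}\,\|u_n-u\|_{L^q(\Omega)}^{1-\theta},\qquad \tfrac{1}{p}=\tfrac{\theta}{2}+\tfrac{1-\theta}{q},
\end{align*}
with $\theta\in(0,1)$, combined with the $L^2$-convergence and $L^q$-boundedness, gives the desired strong convergence in $L^p(\Omega)$.

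The main technical obstacle is the continuous embedding step, which is a Sobolev-type inequality for the spectral fractional Laplacian: the integrability threshold at $t=0$ of the semigroup integral is precisely the critical Sobolev exponent condition $q<p^*$, while controlling the tail at infinity requires the spectral gap on $L^2_0(\Omega)$. Once this embedding is in place, the compactness follows by a routine interpolation argument.
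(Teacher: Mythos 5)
Your proof is correct and shares the overall architecture of the paper's argument (reduce to the continuous embedding $\HH^r(\Omega)\hookrightarrow L^q(\Omega)$ for $q<p^*$, then upgrade to compactness by interpolating against the compact $L^2$-embedding of Lemma~\ref{lem.spfrSob}), but you reach the continuous embedding by a genuinely different route. Starting from the same representation $u=(-\Delta)^{-r/2}v=\frac{1}{\Gamma(r/2)}\int_0^\infty t^{r/2-1}e^{t\Delta}v\,\dd t$ with $v=(-\Delta)^{r/2}u$, the paper applies Gagliardo--Nirenberg pointwise in $t$, $\|e^{t\Delta}v\|_{L^p}\le\|e^{t\Delta}v\|_{L^2}^{1-\theta}\|\nabla e^{t\Delta}v\|_{L^2}^{\theta}$ with $\theta=\frac{d(p-2)}{2p}$, and then a H\"older inequality in time that isolates the $L^2_tH^1_x$ energy bound for $\nabla e^{t\Delta}v$ and the exponential $L^2$ decay; the threshold $p<p^*$ emerges as the convergence condition for a resulting weighted exponential integral. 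You instead invoke the ultracontractivity estimate $\|e^{t\Delta}\|_{L^2\to L^q}\lesssim t^{-\frac{d}{2}(\frac12-\frac1q)}$, split the integral at $t=1$, and control the tail via the spectral gap plus a further application of ultracontractivity --- the classical Nash--Varopoulos derivation of Sobolev-type embeddings from heat-kernel decay. Your route is arguably cleaner but takes as an input the ultracontractivity of the Neumann semigroup on a bounded smooth domain, a standard yet nontrivial fact (resting on Gaussian heat-kernel upper bounds or Nash's inequality); the paper's argument assembles the estimate from only the elementary energy identity for the Neumann heat equation together with Gagliardo--Nirenberg and Poincar\'e, and is thus more self-contained. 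You also write out the interpolation step ($1/p=\theta/2+(1-\theta)/q$ with $q\in(p,p^*)$) that converts $L^2$-compactness plus $L^q$-boundedness into $L^p$-compactness, which the paper leaves implicit when it asserts that continuity of the embedding suffices.
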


\begin{proof} 
	
We preliminarily observe that, thanks to Lemma \ref{lem.spfrSob}, it suffices to prove that the embedding $\HH^r(\Omega)\hookrightarrow L^p(\Omega)$ is continuous for every $p\in [1,p^*)$. We employ \eqref{eq:sfl_sg.inv} to show that 
\begin{align}
	\label{aaa.1}
	\|(-\Delta)^{-r/2}v\|_{L^p(\Omega)} \leq C_p \|v\|_{L^2(\Omega)}\qquad\text{ for all } v\in L^2_0(\Omega).
\end{align}
This will yield the claim in the statement of the Lemma. Indeed, given $u\in \HH^r(\Omega)$, define $v = (-\Delta)^{r/2}u$. Then $v\in L^2_0(\Omega)$ due to the definitions of $\HH^r(\Omega)$ and of the fractional Laplacian. Since 
$$ 
(-\Delta)^{-r/2}(-\Delta)^{r/2}w = w\qquad \text{ for all } w\in \HH^r(\Omega)\cap L^2_0(\Omega),
$$
we infer
\begin{align*}
\left\|u - \fint_\Omega u\right\|_{L^p(\Omega)} = \| (-\Delta)^{-r/2}v \|_{L^p(\Omega)}\leq C_p \|v\|_{L^2(\Omega)} = C_p \|(-\Delta)^{r/2}u\|_{L^2(\Omega)},
\end{align*}
which proves the claim. 

Let us now show \eqref{aaa.1}. Let $s = r/2\in (0,1/2)$. From \eqref{eq:sfl_sg.inv}, we deduce 
\begin{align*}
\|(-\Delta)^{-s}v\|_{L^p(\Omega)}\leq \frac{1}{\Gamma(s)} \int_0^\infty t^{s-1}\|e^{t\Delta}v\|_{L^p(\Omega)} \dd t.
\end{align*}
By standard results on the heat equation (obtainable, e.\,g., via energy methods or spectral arguments), we have 
\begin{align}
	\label{heat.est}
\|e^{t\Delta}v\|_{L^\infty((0,+\infty);L^2(\Omega))}^2 + \|\nabla e^{t\Delta}v\|_{L^2((0,+\infty); L^2(\Omega))}^2 &\leq \|v\|_{L^2(\Omega)}^2 ,\\
\label{heat.decay}
\|e^{t\Delta}v\|_{L^2(\Omega)}&\leq e^{-\lambda_1 t}\|v\|_{L^2(\Omega)}, \qquad t>0,
\end{align}
where $\lambda_1>0$ is the smallest eigenvalue of $-\Delta$ on $L^2_0(\Omega)$ (or, equivalently, the proportionality constant in Poincar\'e's inequality). 

Then, Gagliardo--Nirenberg and Poincar\'e's inequalities lead to
\begin{align}
	\label{GN.heat}
\|e^{t\Delta}v\|_{L^p(\Omega)}\leq \|e^{t\Delta}v\|_{L^2(\Omega)}^{1-\theta}\|\nabla e^{t\Delta}v\|_{L^2(\Omega)}^{\theta},\quad \theta = \frac{d(p-2)}{2p},\quad p < \frac{2d}{d-2r} = \frac{2d}{d-4s}.
\end{align}
Therefore, integrating \eqref{GN.heat} in time and employing H\"older's inequality yield
\begin{align*}
\int_0^\infty t^{s-1}\|e^{t\Delta}v\|_{L^p(\Omega)} \dd t &\leq
\int_0^\infty t^{s-1}\|e^{t\Delta}v\|_{L^2(\Omega)}^{1-\theta}\|\nabla e^{t\Delta}v\|_{L^2(\Omega)}^{\theta} \dd t\\
&\leq \left(
\int_0^\infty t^{-\frac{2(1-s)}{2-\theta}} 
\|e^{t\Delta}v\|_{L^2(\Omega)}^{\frac{2(1-\theta)}{2-\theta}}\dd t
\right)^{1-\theta/2}\left(
\int_0^\infty \|\nabla e^{t\Delta}v\|_{L^2(\Omega)}^2 \dd t\right)^{\theta/2}
\end{align*}
so from \eqref{heat.est}, \eqref{heat.decay} we get
\begin{align}\label{aaa.2}
\int_0^\infty t^{s-1}\|e^{t\Delta}v\|_{L^p(\Omega)} \dd t &\leq
\|v\|_{L^2(\Omega)} \left(
\int_0^\infty t^{-\frac{2(1-s)}{2-\theta}} 
e^{-\frac{2\lambda_1(1-\theta)}{2-\theta}t}\dd t
\right)^{1-\theta/2}.
\end{align}
The right-hand side of \eqref{aaa.2} is finite if and only if $\frac{2(1-s)}{2-\theta} < 1$, which (after some elementary computations) can be rewritten as $p < \frac{2d}{d-4s} = p^*$. This concludes the proof.
\end{proof}

\begin{cor}\label{Hminusr.coro} Let $0<r<1$, $q>\frac{2d}{d+2r}$. There exists $C>0$ such that
$$
\|(-\Delta)^{-r/2}f\|_{L^2(\Omega)}\leq C\|f\|_{L^q(\Omega)}
$$
for every $f\in L^q_0(\Omega)$.
\end{cor}
\begin{proof}
From Lemma \ref{Sob.emb.spfr} it follows via duality that $L^q(\Omega)\hookrightarrow (\HH^r(\Omega))^*$ continuously. However the definition of $\|\cdot\|_{\HH^r(\Omega)}$ implies
\begin{align*}
\|f\|_{(\HH^r(\Omega))^*} 
&= \sup\left\{
\int_\Omega f u\, dx~:~ u\in\HH^r(\Omega),~\|u\|_{\HH^r(\Omega)}\leq 1
\right\}\\
&\geq \sup\left\{
\int_\Omega f\, (-\Delta)^{-r/2}v\, dx~:~ v\in L^2_0(\Omega),~\|v\|_{L^2(\Omega)}\leq 1
\right\}\\
&= \sup\left\{
\int_\Omega v\, (-\Delta)^{-r/2}f\, dx~:~ v\in L^2(\Omega),~\|v\|_{L^2(\Omega)}\leq 1
\right\}\\
&=\|(-\Delta)^{-r/2}f\|_{L^2(\Omega)}.
\end{align*}
This finishes the proof of the Corollary.
\end{proof}

The next result states roughly that ``$\nabla^r u = 0$ implies $u= \text{constant}$'' provided that the domain $\Omega$ is connected.
\begin{lemma}\label{lem.spfrL}
Assume $\Omega$ is connected, and let $u\in \HH^r(\Omega)$ such that 
$\nabla (-\Delta)^{(r-1)/2}u=0$ a.\,e.~in $\Omega$. Then $u=\fint_\Omega u \dd x$ a.\,e.~in $\Omega$.
\end{lemma}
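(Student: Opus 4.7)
\smallskip

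\noindent\textbf{Proof plan for Lemma \ref{lem.spfrL}.} The plan is to show that the auxiliary function $w := (-\Delta)^{(r-1)/2}u$ is a constant on $\Omega$ (using the connectedness hypothesis on $\Omega$), and then that this constant must be zero, from which $u=0=\fint_\Omega u\,\dd x$ follows by injectivity of $(-\Delta)^{(r-1)/2}$ on $L^2_0(\Omega)$.

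First, I would verify that $w\in L^2_0(\Omega)$. Since $u\in L^2_0(\Omega)$, expand $u=\sum_{j\ge 1}u_j\psi_j$ with $u_j=(\psi_j,u)_{L^2(\Omega)}$ and $\sum_j|u_j|^2=\|u\|_{L^2(\Omega)}^2<\infty$. By the spectral definition \eqref{frac.L},
\[
w=\sum_{j\ge 1}\lambda_j^{(r-1)/2}u_j\,\psi_j.
\]
Because $(r-1)/2<0$ and $\lambda_j\ge\lambda_1>0$, we have $\lambda_j^{(r-1)/2}\le\lambda_1^{(r-1)/2}$, so the series converges in $L^2(\Omega)$ with $\|w\|_{L^2(\Omega)}\le\lambda_1^{(r-1)/2}\|u\|_{L^2(\Omega)}$; orthogonality to the constants is inherited from that of the $\psi_j$.

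Second, I would combine this with the hypothesis to place $w$ in $H^1(\Omega)$. By the definition of $\HH^r(\Omega)$, $\nabla w\in L^2(\Omega)$, and by assumption it vanishes almost everywhere. Together with $w\in L^2(\Omega)$, this gives $w\in H^1(\Omega)$ with $\nabla w=0$ a.e. Since $\Omega$ is connected, the standard distributional argument forces $w$ to equal a constant a.e., and the zero-mean condition $w\in L^2_0(\Omega)$ then pins this constant to $0$, so $w\equiv 0$ in $\Omega$.

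Finally, I would conclude $u=0$ from $w=0$ by testing against each $\psi_k$: orthonormality of $(\psi_j)_{j\ge 1}$ yields $\lambda_k^{(r-1)/2}u_k=(\psi_k,w)_{L^2(\Omega)}=0$, and since $\lambda_k>0$, this gives $u_k=0$ for every $k\ge 1$, hence $u\equiv 0$ a.e. As $u\in L^2_0(\Omega)$, this coincides with $\fint_\Omega u\,\dd x=0$, proving the claim. The only subtlety in this outline is checking that $w$ really does define an $H^1(\Omega)$ function (so that the connectedness argument applies); the rest is purely algebraic from the spectral calculus and should not present any difficulty.
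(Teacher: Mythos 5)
Your proposal is correct and takes essentially the same route as the paper: exploit that $\nabla w = 0$ a.e. on the connected domain forces $w := (-\Delta)^{(r-1)/2}u$ to be a constant, then use spectral decomposition / orthogonality to the eigenfunctions $\psi_j$ to conclude that all spectral coefficients of $u$ vanish. The only cosmetic difference is that you first pin the constant to zero by observing $w\in L^2_0(\Omega)$, whereas the paper skips this step by directly using that each $\psi_j$ has zero average, so $(\psi_j, c)_{L^2(\Omega)} = 0$ regardless of $c$; both lead to the same conclusion.
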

\begin{proof}
Since $\nabla (-\Delta)^{(r-1)/2}u=0$ a.\,e.~in $\Omega$ it follows that
a constant $c\in\R$ exists such that 
$(-\Delta)^{(r-1)/2}u=c$ a.\,e.~in $\Omega$. Therefore, for $j\geq 1$ arbitrary, having $\psi_j$ zero average
\begin{align*}
0 = (\psi_j,c)_{L^2(\Omega)} = (\psi_j, (-\Delta)^{(r-1)/2}u)_{L^2(\Omega)} = \lambda_j^{r-1}(\psi_j, u)_{L^2(\Omega)}
\end{align*}
implying that $u=\fint_\Omega u \dd x$ a.\,e.~in $\Omega$.
This concludes the proof.
\end{proof}

Finally, we recall a Poincar\'e-Wirtinger's inequality for the spectral fractional derivative.
\begin{lemma}[Poincar\'e-Wirtinger's inequality for $\HH^r$]\label{lem.Poi}
Let $\Omega$ connected. Then a positive constant $C_p$ exists such that
\begin{align*}
\|u\|_{L^2(\Omega)}\leq C_p\|\nabla (-\Delta)^{(r-1)/2}u\|_{L^2(\Omega)}\quad\text{ for all } u\in \HH^r(\Omega)\cap L^2_0(\Omega).
\end{align*}
\end{lemma}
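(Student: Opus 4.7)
The plan is to pass to the spectral basis. Writing $u = \sum_{j=1}^\infty c_j\psi_j$ with $c_j = (\psi_j, u)_{L^2(\Omega)}$ (note that the $j=0$ constant mode is absent because $u\in L^2_0(\Omega)$), the hypothesis $u\in\HH^r(\Omega)$ means exactly that $\sum_j\lambda_j^r c_j^2<\infty$, so $v\coloneqq(-\Delta)^{(r-1)/2}u=\sum_j\lambda_j^{(r-1)/2}c_j\psi_j$ satisfies $\nabla v\in L^2(\Omega)$ by hypothesis. The two quantities in the inequality are therefore both expressible as weighted $\ell^2$ norms of $(c_j)_j$, and the estimate will reduce to the trivial observation $1\leq \lambda_1^{-r}\lambda_j^r$ for every $j\geq 1$.

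First, I would establish the identity
\begin{equation*}
\|\nabla(-\Delta)^{(r-1)/2}u\|_{L^2(\Omega)}^2 = \sum_{j=1}^\infty \lambda_j^r c_j^2.
\end{equation*}
The natural route is Green's formula: since each $\psi_j$ satisfies Neumann boundary conditions and $-\Delta\psi_j=\lambda_j\psi_j$, one has on partial sums $v_N\coloneqq\sum_{j=1}^N \lambda_j^{(r-1)/2}c_j\psi_j$ the exact identity $\|\nabla v_N\|_{L^2(\Omega)}^2 = (v_N,-\Delta v_N)_{L^2(\Omega)}=\sum_{j=1}^N\lambda_j^r c_j^2$. Since $v_N\to v$ in the norm of $\HH^r$ (by definition of the space and of $\|\cdot\|_{\HH^r}$), and $\nabla v_N\to \nabla v$ in $L^2(\Omega)$, we may pass to the limit $N\to\infty$.

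Second, I would use the connectedness of $\Omega$ to guarantee that the smallest Neumann eigenvalue on $L^2_0(\Omega)$ is strictly positive, $\lambda_1>0$. Then, again by Parseval's identity,
\begin{equation*}
\|u\|_{L^2(\Omega)}^2 = \sum_{j=1}^\infty c_j^2 \leq \lambda_1^{-r}\sum_{j=1}^\infty \lambda_j^r c_j^2 = \lambda_1^{-r}\|\nabla(-\Delta)^{(r-1)/2}u\|_{L^2(\Omega)}^2,
\end{equation*}
yielding the claim with the explicit constant $C_p=\lambda_1^{-r/2}$.

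I do not expect any serious obstacle: the only technical point is the justification of Green's formula for $v=(-\Delta)^{(r-1)/2}u$, which need not lie in $H^2(\Omega)$, but this is resolved cleanly by spectral truncation as above (alternatively, by observing that $v\in H^1(\Omega)$ together with $-\Delta v\in \HH^{-r}(\Omega)$ in distributional sense suffices, since the pairing $(v,-\Delta v)$ is defined through the spectral expansion). No macro or new tool beyond the eigenfunction expansion \eqref{frac.L}–\eqref{eigenf} is needed.
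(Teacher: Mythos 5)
Your proposal is correct and follows essentially the same route as the paper: both expand $u$ in the Neumann eigenbasis, reduce $\|\nabla(-\Delta)^{(r-1)/2}u\|_{L^2(\Omega)}^2$ to $\sum_j\lambda_j^r|(u,\psi_j)_{L^2}|^2$ via the orthogonality relation $(\nabla\psi_j,\nabla\psi_k)_{L^2}=\lambda_j\delta_{jk}$, and then invoke $\lambda_j\geq\lambda_1>0$. The only difference is cosmetic: you justify the spectral identity by Green's formula on partial sums and a limiting argument, whereas the paper writes the double series and applies the gradient orthogonality directly.
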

\begin{proof} 
Let $\phi_j$, $j\geq 1$ be a complete orthonormal system for $L^2_0(\Omega)$ made by eigenfunctions of $-\Delta$ with homogeneous Neumann boundary conditions.
The spectral decomposition of $(-\Delta)^{(r-1)/2}u$ yields
\begin{align*}
\|\nabla (-\Delta)^{(r-1)/2}u\|_{L^2(\Omega)}^2 = 
\sum_{j,k=1}^\infty \lambda_j^{(r-1)/2}\lambda_k^{(r-1)/2}
(u,\phi_j)_{L^2(\Omega)}(u,\phi_k)_{L^2(\Omega)}
(\nabla\phi_j,\nabla\phi_k)_{L^2(\Omega)}
\end{align*}
and since $(\nabla\phi_j,\nabla\phi_k)_{L^2(\Omega)} = \lambda_j\delta_{jk}$
\begin{align*}
\|\nabla (-\Delta)^{(r-1)/2}u\|_{L^2(\Omega)}^2 = 
\sum_{j=1}^\infty \lambda_j^{r}|(u,\phi_j)_{L^2(\Omega)}|^2
\end{align*}
but being $\lambda_j\geq\lambda_1>0$ for every $j\geq 1$ we infer
\begin{align*}
\|\nabla (-\Delta)^{(r-1)/2}u\|_{L^2(\Omega)}^2 \geq \lambda_1^r 
\sum_{j=1}^\infty |(u,\phi_j)_{L^2(\Omega)}|^2 = 
\lambda_1^r \|u\|_{L^2(\Omega)}^2 .
\end{align*}

\end{proof}

\section{Existence of weak solutions}
\label{sec:thm-ex}

To prove the existence result, we shall rely on an approximation approach on four levels: 
\begin{enumerate}
	\item Introduction of a \emph{small viscosity} $\kappa \Delta u_i$; 
	\item \emph{Removal of the singularity in the fractional Laplace operator}: following \cite{MR3916644}, we shall replace $(-\Delta)^{(\beta-1)/2}$ by $(-\Delta)^{-1}{\mathcal L_\eps^{(\beta + 1)/4}\mathcal L_\eps^{(\beta + 1)/4}}$, where, for $\alpha \in (0,1)$ and $\eps >0$, we define
		\begin{align}\label{eq:approx-L}
		\mathcal{L}_{\varepsilon}^\alpha[u](x)\coloneqq 
		\frac{1}{\Gamma(1-\alpha)}
		\int_{\varepsilon}^{\infty}\left(u(x)-e^{t \Delta} u(x)\right) t^{-1-\alpha} \dd t,
		\end{align}
	where $\Gamma(s)$ is the Euler Gamma function: $\Gamma(s) = \int_0^\infty t^{s-1}e^{-t}dt$.
	\item \emph{Zero-average quadratic correction}: We consider the mapping 
\begin{align}\label{eq:g-ap}
	g_\rho[u](x) = \frac{u(x)^2}{1+\rho |u(x)|} - \fint_\Omega \frac{u(y)^2}{1+\rho |u(y)|}\dd y,\quad u\in L^2(\Omega),
\end{align}
which has the following properties
\begin{align}\label{grho.1}
&\|g_\rho[u]\|_{L^1(\Omega)}\leq 2\|u\|_{L^2(\Omega)}^2,\quad 
\|g_\rho[u]\|_{L^2(\Omega)}\leq C(\rho)\|u\|_{L^2(\Omega)},\\
\label{grho.2}
&\|g_\rho[u] - g_\rho[v]\|_{L^2(\Omega)}\leq C(\rho)\|u-v\|_{L^2(\Omega)},
\end{align}
for every $u,v\in L^2(\Omega)$.
\item \emph{Truncation}: we shall truncate the mobility in the cross-diffusion system by the following operator: 
\begin{align*}
T_M(x) = \min\{ \max\{\xi,0\}, M \},\qquad \xi\in\R,
\end{align*}
for $M>0$. Here, we point out the following elementary property of $T_M$:
\begin{align}\label{TM.1}
|T_M(\xi)-T_M(\xi')|\leq C(M)|\xi-\xi'|,\qquad \xi,\,\xi'\in\R.
\end{align}
\end{enumerate}

The addition of an artificial viscosity is a classical trick to establish the well-posedness of the regularized problem by fixed-point methods. In addition to this, we also regularize the singularity of the fractional Laplacian operator by adopting the strategy of \cite{MR3916644}; finally, in order to obtain a control of the approximate solutions in a higher norm, we introduce a quadratic correction (zero-average, in order to preserve mass).  As a technical point, we notice the need to truncate the solution in order to obtain the well-posedness of the approximated problem in (roughly speaking) $H^1$. The passage to the limit is carried out as $M \to +\infty$, $\rho \to 0^+$, $\eps \to 0^+$, and $\kappa \to 0^+$ thanks to suitable (uniform) a priori estimates and an Aubin--Lions-type argument.

Let us collect some properties of the regularized operator $\mathcal L_\eps^\alpha$. 

\begin{prop}[Properties of the approximate operator $\mathcal L^\alpha_\eps$]\label{Prop.Le} Let $\alpha,\eps>0$. The operator $\mathcal L^\alpha_\eps$, defined in \eqref{eq:approx-L}, satisfies the following properties:
\begin{enumerate}
	\item \emph{Generalized positivity (for multi-species populations)}: Let $(b_{ij})_{i,j=1,\dots,n}$ be a symmetric positive definite matrix. Then 
\begin{align}\label{sys.mono}
\sum_{i,j=1}^{n}b_{ij}\int_{\Omega}f_i \Lea [f_j] \dd x \geq 0\quad
\mbox{for every $f_1,\ldots,f_n\in L^2(\Omega)$.}
\end{align}
\item \emph{Self-adjointness:} for every $f, g\in L^2(\Omega)$ it holds 
\begin{align*}
\int_\Omega f \Le^{\alpha} g \, \mathrm dx = \int_\Omega g \Le^{\alpha} f \, \mathrm dx.
\end{align*}
	\item  \emph{Approximation error:} For any \(\delta \in(0,1-\alpha)\), there is \(C_\delta>0\) such that
	\begin{align}\label{Le.err}
	\left\|(-\Delta)^{-1} \mathcal{L}_{\varepsilon}^\alpha[f]-(-\Delta)^{-1+\alpha} f\right\|_{L^2(\Omega)} \leq C_\delta \varepsilon^{1-\alpha-\delta}\|f\|_{L^2(\Omega)},
	\end{align}
	for all \(0<\varepsilon<1\).
\item \emph{Spectral decomposition:} for any $f\in L^2(\Omega)$ it holds
\begin{align}\label{Le.spec}
\Le^\alpha f = \sum_{j=1}^\infty \lambda_j^\alpha g_\alpha(\eps\lambda_j) f_j \phi_j,\quad f_j\equiv \int_\Omega f \phi_j \, \mathrm d x,\quad 
g_\alpha(s)\equiv \int_s^\infty \frac{1-e^{-t}}{\Gamma(1-\alpha)t^{1+\alpha}} \dd t\quad \text{ for all } s\geq 0.
\end{align}
\end{enumerate}
\end{prop}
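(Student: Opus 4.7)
My plan is to prove the spectral decomposition (claim (4)) first, since the other three claims follow from it as essentially algebraic consequences.

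For claim (4), I would decompose $f \in L^2(\Omega)$ into its mean $\fint_\Omega f \, \mathrm{d}x$ plus $\sum_{j \geq 1} f_j \psi_j$. Since $e^{t\Delta}$ preserves constants under homogeneous Neumann boundary conditions, the constant part of $f$ is annihilated by $u \mapsto u - e^{t\Delta} u$ and hence by $\mathcal{L}_\eps^\alpha$. On each eigenmode, $e^{t\Delta}\psi_j = e^{-\lambda_j t}\psi_j$, so
\begin{align*}
\mathcal{L}_\eps^\alpha \psi_j = \frac{\psi_j}{\Gamma(1-\alpha)} \int_\eps^\infty (1-e^{-\lambda_j t})\, t^{-1-\alpha} \, \mathrm{d}t,
\end{align*}
and the substitution $s = \lambda_j t$ produces $\lambda_j^\alpha g_\alpha(\eps \lambda_j) \psi_j$. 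Swapping summation and integration in $\mathcal{L}_\eps^\alpha f = \sum_j f_j \mathcal{L}_\eps^\alpha \psi_j$ is justified by dominated convergence, using the pointwise bound $(1 - e^{-\lambda_j t}) t^{-1-\alpha} \leq t^{-1-\alpha}$ (integrable on $[\eps,\infty)$) together with Parseval.

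Claims (1) and (2) then follow directly from the spectral formula. Self-adjointness (2) is immediate because the symbol $\lambda_j^\alpha g_\alpha(\eps \lambda_j)$ is real; alternatively, the self-adjointness of $e^{t\Delta}$ on $L^2(\Omega)$ passes through the integral definition of $\mathcal{L}_\eps^\alpha$ against the scalar weight $t^{-1-\alpha}/\Gamma(1-\alpha)$. For generalized positivity (1), Parseval gives
\begin{align*}
\sum_{i,j=1}^n b_{ij} \int_\Omega f_i \mathcal{L}_\eps^\alpha f_j \, \mathrm{d}x = \sum_{k \geq 1} \lambda_k^\alpha g_\alpha(\eps \lambda_k) \sum_{i,j=1}^n b_{ij} (f_i)_k (f_j)_k,
\end{align*}
with $(f_i)_k \coloneqq (\psi_k, f_i)_{L^2(\Omega)}$. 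Each summand is nonnegative: $g_\alpha(\eps\lambda_k) \geq 0$ since the integrand in its definition is nonnegative, and the inner quadratic form in the $(f_i)_k$ is nonnegative by positive-definiteness of $(b_{ij})$.

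For claim (3), both $(-\Delta)^{-1}\mathcal{L}_\eps^\alpha$ and $(-\Delta)^{-1+\alpha}$ are diagonal in $\{\psi_k\}$; by Parseval, the estimate reduces to a uniform bound on the symbol of their difference, essentially of the form $\lambda_k^{\alpha-1}|g_\alpha(\eps\lambda_k) - g_\alpha(0)|$. The basic estimate
\begin{align*}
|g_\alpha(0) - g_\alpha(\eps\lambda_k)| = \frac{1}{\Gamma(1-\alpha)} \int_0^{\eps\lambda_k}(1-e^{-t})\,t^{-1-\alpha}\,\mathrm{d}t \leq C(\eps\lambda_k)^{1-\alpha}
\end{align*}
(using $1-e^{-t}\leq t$) yields a $k$-uniform symbol bound of order $\eps^{1-\alpha}$, and Parseval concludes; the small loss $\delta>0$ in the exponent accommodates a more delicate interpolation in the regime $\eps\lambda_k \gtrsim 1$, where the pointwise bound $1-e^{-t}\leq t$ is wasteful, balanced against a crude uniform bound there.

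The main technical obstacle is the precise bookkeeping in (3): one must verify both that the two operators agree in the limit $\eps \to 0$ and that the symbol-level estimate produces the correct power of $\eps$ with uniformity in $k$. An alternative, possibly cleaner approach would avoid the spectral computation by using the identity $(-\Delta)^{-1}(u - e^{t\Delta} u) = \int_0^t e^{s\Delta} u \, \mathrm{d}s$ followed by Fubini, reducing the difference to an integral supported near $s = 0$ that is estimated directly via $\|e^{s\Delta} u\|_{L^2(\Omega)} \leq \|u\|_{L^2(\Omega)}$.
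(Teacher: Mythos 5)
Your proposal is correct, and it takes a genuinely different route from the paper for two of the four claims, so a comparison is worthwhile.

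For the positivity statement (1), the paper works directly from the integral representation \eqref{eq:approx-L}: it replaces $e^{t\Delta}$ by $e^{(t/2)\Delta}e^{(t/2)\Delta}$, sets $u_i(t)=e^{t\Delta}f_i$, and observes that $t\mapsto \int_\Omega \sum_{i,j}b_{ij}u_i u_j\dd x$ is non-increasing along the heat flow because $(b_{ij})$ is positive definite, whence each of the two terms in \eqref{eq:approx-L} is ordered correctly. You instead diagonalize first and reduce the quadratic form, via Parseval, to a non-negative series $\sum_k \lambda_k^\alpha g_\alpha(\eps\lambda_k)\sum_{i,j}b_{ij}(f_i)_k(f_j)_k$. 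Both arguments are valid; yours is shorter once \eqref{Le.spec} is in hand, while the paper's energy argument requires no spectral machinery and makes the role of $e^{t\Delta}$ as a contraction semigroup more transparent. For (4) the two proofs coincide: the paper explicitly says the spectral formula "is easily shown by exploiting the spectral decomposition of $e^{t\Delta}$,'' and your eigenfunction computation is the obvious realization of this. For (2) the paper offers no explicit proof; your two suggested justifications (real symbol, or self-adjointness of $e^{t\Delta}$ pushed through the $t$-integral) are both fine. The main divergence is (3): the paper simply cites \cite[Lemma 1]{MR4333988}, whereas you sketch a self-contained symbol estimate. Your sketch is sound, and in fact simpler than you make it out to be: the crude bound $1-e^{-t}\le t$ gives $|g_\alpha(0)-g_\alpha(\eps\lambda_k)|\le C(\eps\lambda_k)^{1-\alpha}$ for \emph{all} $k$, which after multiplying by $\lambda_k^{\alpha-1}$ produces a $k$-uniform symbol bound $C\eps^{1-\alpha}$ with no $\delta$-loss at all; the "more delicate interpolation in the regime $\eps\lambda_k\gtrsim 1$'' you invoke to explain the $\delta$ is unnecessary. (Separately, note that with the normalization in \eqref{eq:approx-L} one has $g_\alpha(0)=1/\alpha$ rather than $1$, so the limiting symbol of $(-\Delta)^{-1}\Lea$ is $\alpha^{-1}\lambda_k^{\alpha-1}$, not $\lambda_k^{\alpha-1}$; a prefactor $\alpha$ seems to be missing in the definition of $\Lea$ for \eqref{Le.err} to hold as stated. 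This is an inconsistency you inherit from the paper, not an error in your argument.)
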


\begin{proof}
We first prove \eqref{sys.mono}. It holds
\begin{align*}
\sum_{i,j=1}^{n}b_{ij}\int_{\Omega}f_i \Lea [f_j] \dd x &= 
\sum_{i,j=1}^{n}b_{ij}\int_{\Omega}f_i \int_\eps^\infty( f_j - e^{t\Delta}f_j )t^{-1-\alpha}\dd t \dd x\\
&= \int_\eps^\infty t^{-1-\alpha}\left(
\int_\Omega\sum_{i,j=1}^n b_{ij}f_i f_j \dd x - 
\int_\Omega\sum_{i,j=1}^n b_{ij}f_i e^{t\Delta}f_j \dd x
\right)\dd t.
\end{align*}
Being $e^{t\Delta} = e^{(t/2)\Delta}e^{(t/2)\Delta}$ self-adjoint, it follows
\begin{align*}
\sum_{i,j=1}^{n}b_{ij}\int_{\Omega}f_i \Lea [f_j] \dd x &= 
\int_\eps^\infty t^{-1-\alpha}\left(
\int_\Omega\sum_{i,j=1}^n b_{ij}f_i f_j \dd x - 
\int_\Omega\sum_{i,j=1}^n b_{ij}(e^{(t/2)\Delta}f_i) (e^{(t/2)\Delta}f_j) \dd x
\right)\dd t.
\end{align*}
Let $u_i(t) = e^{t\Delta}f_i$, $i\in\{1,\ldots,n\}$. It holds
\begin{align*}
\pa_t u_i - \Delta u_i = 0\quad \mbox{on }\Omega\times (0,+\infty),\qquad
\pa_\nu u_i = 0\quad\mbox{on }\pa\Omega\times (0,+\infty),\qquad
u_i(0) = f_i\quad\mbox{on }\Omega.
\end{align*}
Therefore, being $b_{ij}$ symmetric and positive definite
\begin{align*}
\frac{\d}{\d t}\int_\Omega \sum_{i,j=1}^n b_{ij}u_i u_j \dd x = 
2\int_\Omega \sum_{i,j=1}^n b_{ij}u_i \pa_t u_j \dd x = 
2\int_\Omega \sum_{i,j=1}^n b_{ij}u_i \Delta u_j \dd x = 
-2\int_\Omega \sum_{i,j=1}^n b_{ij}\nabla u_i\cdot \nabla u_j \dd x\leq 0.
\end{align*}
It follows
\begin{align*}
\int_\Omega\sum_{i,j=1}^n b_{ij}(e^{(t/2)\Delta}f_i) (e^{(t/2)\Delta}f_j) \dd x &= \int_\Omega \sum_{i,j=1}^n b_{ij}u_i(t/2) u_j(t/2) \dd x\\ 
&\leq \int_\Omega \sum_{i,j=1}^n b_{ij}u_i(0) u_j(0) \dd x
\\&= \int_\Omega \sum_{i,j=1}^n b_{ij}f_i f_j \dd x.
\end{align*}
We infer that \eqref{sys.mono} holds.
Estimate \eqref{Le.err} is proved in \cite[Lemma 1]{MR4333988}.
Relation \eqref{Le.spec} is easily shown by exploiting the spectral decomposition of $e^{t\Delta}$. This finishes the proof of Prop.~\ref{Prop.Le}.
\end{proof}

\begin{proof}[Proof of Theorem \ref{thm:ex}]

 We shall argue by approximation by combining ideas from both \cite{MR4401974} and \cite{MR3916644}.

\textbf{Step 1.} \emph{Construction of the approximated problem.}
Let us first define, for a generic but fixed $M>0$, the truncation operator
	\begin{align*}
	T_M(x) = \min\{ \max\{x,0\}, M \}\quad x\in\R .
	\end{align*}
Let us fix $\eps, \kappa, \rho >0$ and consider the approximated problem:
\begin{align}\label{1.app}
&\pa_t u_i^{(M,\rho,\eps,\kappa)} - \kappa\Delta u_i^{(M,\rho,\eps,\kappa)}
+\kappa g_\rho[u_i^{(M,\rho,\eps,\kappa)}] \\ 
\nonumber
&\ = \Div\left(
\sum_{j=1}^n a_{ij} {T_M(u_i^{(M,\rho,\eps,\kappa)})}\nabla (-\Delta)^{-1} {\mathcal L_\eps^{(\beta + 1)/4}\mathcal L_\eps^{(\beta + 1)/4}}u_j^{(M,\rho,\eps,\kappa)}\right), && (t,x) \in (0,+\infty) \times \Omega,\\
\label{1.app.bc}
&\kappa\nu \cdot\nabla u_i^{(M,\rho,\eps,\kappa)} \\ \nonumber
& \ + \sum_{j=1}^n a_{ij}{T_M(u_i^{(M,\rho,\eps,\kappa)})}\nu\cdot\nabla (-\Delta)^{-1} {\mathcal L_\eps^{(\beta + 1)/4}\mathcal L_\eps^{(\beta + 1)/4}} u_j^{(M,\rho,\eps,\kappa)} = 0, && (t,x) \in (0,+\infty)\times \partial\Omega,\\
\label{1.app.ic}
& u_i^{(M,\rho,\eps,\kappa)}(0,\cdot) = u_{0,i}, && x \in \Omega,
\end{align}
where $\mathcal L^{(\beta+1)/4}_\eps$ is defined in \eqref{eq:approx-L} and $g_\rho[\cdot]$ in \eqref{eq:g-ap}.


The proof of the well-posedness of \eqref{1.app}--\eqref{1.app.ic} and the limits $\rho\to 0$, $\eps \to 0^+$, $\kappa\to 0$, and $M \to \infty$, proceeds in a totally similar way as the corresponding proof of \cite[Theorem 1]{MR4401974}.

\textbf{Step 2.} \emph{Local well-posedness of the approximated problem.} We prove the existence of solutions to \eqref{1.app}--\eqref{1.app.ic} for short times by applying Banach fixed point theorem. Let $R > 2\|u_0\|_{L^2(\Omega)}$, $T>0$ generic but fixed. Define the spaces
\begin{align*}
X_T &\coloneqq  C^0([0,T]; L^2(\Omega))\cap L^2((0,T); H^1(\Omega)),\\
X_{R,T} &\coloneqq  \left\{ v \in X ~~:~~ \sum_{i=1}^n(\|v_i\|_{L^\infty((0,T); L^2(\Omega))} + \|v_i\|_{L^2((0,T); H^1(\Omega))})\leq R \right\}.
\end{align*}
Consider the mapping $F : v\in X_{R,T}\mapsto u\in X_{R,T}$, where $u=(u_1,\ldots,u_n)$ is the unique solution to the linear problem
\begin{align}\label{lin.1}
&\pa_t u_i - \kappa\Delta u_i \\ \nonumber
&\quad = -\kappa g_\rho[v_i] + 
\Div\left(
\sum_{j=1}^n a_{ij} T_M(v_i)\nabla (-\Delta)^{-1} \mathcal L_\eps^{(\beta+1)/4}\mathcal L_\eps^{(\beta+1)/4}v_j\right), && (t,x) \in  (0,T)\times \Omega,\\
\label{lin.1.bc}
&\nu\cdot\nabla u_i = 0, && (t,x) \in (0,T) \times \pa\Omega,\\
\label{lin.1.ic}
& u_i(0) = u_{0,i}, && x \in \Omega.
\end{align}
Since this is just the heat equation with a forcing term belonging to $L^2((0,T); H^{1}(\Omega)')$ (as the operators $\Lea : L^2(\Omega)\to L_0^2(\Omega)$, $\pa_{x_j} (-\Delta)^{-1} : L_0^2(\Omega)\to L_0^2(\Omega)$ ($j=1,\ldots,d$) are easily bounded), we deduce that \eqref{lin.1}--\eqref{lin.1.ic} has a unique solution $u_i\in L^2((0,T); H^1(\Omega))$ such that $\pa_t u_i\in L^2((0,T); H^1(\Omega)')$ and thus (via standard results) $u_i\in C^0([0,T]; L^2(\Omega))$.
We will now show that $F$ is well defined, namely that $F(X_{R,T})\subset X_{R,T}$. Employing $u_i$ as a test function in the weak formulation of \eqref{lin.1} and then using \eqref{grho.1} and the boundedness of the operators $\Lea : L^2(\Omega)\to L_0^2(\Omega)$, $\pa_{x_j} (-\Delta)^{-1} : L_0^2(\Omega)\to L_0^2(\Omega)$ yields
\begin{align*}
\frac{1}{2}\int_{\Omega} & |u(t)|^2 \dd x - \frac{1}{2}\int_{\Omega}|u_0|^2 \dd x
+ \kappa\int_0^t\int_\Omega |\nabla u|^2 \dd x \dd \tau \\
=& 
-\kappa\int_0^t\int_\Omega \sum_{i=1}^n u_i g_\rho[v_i]\dd x \dd \tau 
-\sum_{i,j=1}^n a_{ij}\int_0^t\int_\Omega\nabla u_i\cdot T_M(v_i)\nabla (-\Delta)^{-1}\Le^{(\beta+1)/4}\Le^{(\beta+1)/4}v_j \dd x \dd \tau\\
\leq & \kappa C(\rho)\int_0^t\|u\|_{L^2(\Omega)} \|v\|_{L^2(\Omega)} \dd \tau +
C(\eps) M\int_0^t\|\nabla u\|_{L^2(\Omega)} \|v\|_{L^2(\Omega)} \dd \tau
\end{align*}
Young's inequality leads to
\begin{align*}
\frac{1}{2}\int_{\Omega} & |u(t)|^2 \dd x - \frac{1}{2}\int_{\Omega}|u_0|^2 \dd x
+ \frac{1}{2}\kappa\int_0^t\int_\Omega |\nabla u|^2 \dd x \dd \tau \\
\leq & \kappa C(\rho)\int_0^t\int_\Omega |u|^2 \dd x \dd \tau 
+ C(\rho,\kappa,\eps,M)\int_0^t\|v\|_{L^2(\Omega)}^2 \dd \tau
\end{align*}
Since $v\in X_{R,T}$ and $\|u_0\|_{L^2(\Omega)} < R/2$, via Gr\"onwall's lemma we deduce that $u\in X_{R,T}$ provided that $T$ is small enough. Therefore $F$ is well defined.

We will employ Banach's theorem to prove that $F$ has a fixed point in $X_{R,T}$ provided that $T$ is small enough. For $v, v'\in X_{R,T}$ let $u = F(v)$, $u'=F(v')$. Given \eqref{lin.1}, the difference $u-u'$ satisfies
\begin{align*}
\pa_t(u_i - u_i') - \kappa\Delta(u_i - u_i') =& 
-\kappa( g_\rho[v_i] - g_\rho[v'_i] )\\
&+\Div\left(
\sum_{j=1}^n a_{ij} (T_M(v_i) - T_M(v_i'))\nabla (-\Delta)^{-1} \Le^{(\beta+1)/4}\Le^{(\beta+1)/4}v_j\right)\\
&+\Div\left(
\sum_{j=1}^n a_{ij} T_M(v_i')\nabla (-\Delta)^{-1} \Le^{(\beta+1)/4}\Le^{(\beta+1)/4}(v_j - v_j')\right).
\end{align*}
Testing the above equation against $u_i-u_i'$ and summing in $i$ yields
\begin{align*}
\frac{1}{2}\int_\Omega & |u(t)-u'(t)|^2 \dd x + \kappa\int_0^t\int_\Omega|\nabla (u-u')|^2 \dd x \dd \tau  \\
=& -\kappa\int_0^t\int_\Omega
\sum_{i=1}^n( g_\rho[v_i] - g_\rho[v'_i] )(u_i-u_i')\dd x \dd \tau\\
&-\int_0^t\int_\Omega \sum_{i,j=1}^n a_{ij}\nabla (u_i-u_i')\cdot
(T_M(v_i) - T_M(v_i'))\nabla (-\Delta)^{-1} \Le^{(\beta+1)/4}\Le^{(\beta+1)/4}v_j \dd x \dd \tau\\
&-\int_0^t\int_\Omega \sum_{i,j=1}^n a_{ij}\nabla (u_i-u_i')\cdot
T_M(v_i')\nabla (-\Delta)^{-1} \Le^{(\beta+1)/4}\Le^{(\beta+1)/4}(v_j - v_j')
\dd x \dd \tau
\end{align*}
Applying Cauchy--Schwartz and H\"older's inequalities leads to
\begin{align*}
\int_\Omega |u(t)& -u'(t)|^2 \dd x + \int_0^t\int_\Omega|\nabla (u-u')|^2 \dd x \dd \tau \\
\lesssim_\kappa & \int_0^t \sum_{i=1}^n\|g_\rho[v_i] - g_\rho[v'_i]\|_{L^2(\Omega)}
\|u_i - u_i'\|_{L^2(\Omega)}\dd x \dd \tau\\
&+\sum_{i,j=1}^n \int_0^t\|\nabla (u_i - u_i')\|_{L^2(\Omega)}
\|T_M(v_i)-T_M(v_i')\|_{L^2(\Omega)}
\|\nabla (-\Delta)^{-1}\Le^{(\beta+1)/4}\Le^{(\beta+1)/4}v_j\|_{L^\infty(\Omega)}\dd \tau\\
&+\sum_{i,j=1}^n\int_0^t\|\nabla (u_i - u_i')\|_{L^2(\Omega)}
\|T_M(v_i')\|_{L^\infty(\Omega)}
\|\nabla (-\Delta)^{-1} \Le^{(\beta+1)/4}\Le^{(\beta+1)/4}(v_j - v_j')\|_{L^2(\Omega)}\dd \tau
\end{align*}
From \eqref{grho.1} and the definition of $T_M$ it follows
\begin{align*}
\int_\Omega |u(t)& -u'(t)|^2 \dd x + \int_0^t\int_\Omega|\nabla (u-u')|^2 \dd x \dd \tau \\
\lesssim_{\kappa,\rho,M} & \int_0^t \|u - u'\|_{L^2(\Omega)}^2 \dd x \dd \tau
+ \int_0^t \|v - v'\|_{L^2(\Omega)}^2 \dd x \dd \tau\\
&+\sum_{i,j=1}^n \int_0^t\|T_M(v_i)-T_M(v_i')\|_{L^2(\Omega)}^2
\|\nabla (-\Delta)^{-1}\Le^{(\beta+1)/4}\Le^{(\beta+1)/4}v_j\|_{L^\infty(\Omega)}^2 \dd \tau\\
&+\sum_{j=1}^n\int_0^t
\|\nabla (-\Delta)^{-1} \Le^{(\beta+1)/4}\Le^{(\beta+1)/4}(v_j - v_j')\|_{L^2(\Omega)}^2 \dd \tau.
\end{align*}
From \eqref{TM.1}, it follows
\begin{align*}
\sum_{i=1}^n\|T_M(v_i)-T_M(v_i')\|_{L^2(\Omega)}^2 \lesssim_M
\|v-v'\|_{L^2(\Omega)}^{2},
\end{align*}
and Gagliardo--Nirenberg--Sobolev's interpolation inequality 
(NB: here we need the hypothesis $d < 4$) lead to
\begin{align*}
\|\nabla (-\Delta)^{-1}\Le^{(\beta+1)/4}\Le^{(\beta+1)/4}v_j\|_{L^\infty(\Omega)} &\lesssim
\|\nabla (-\Delta)^{-1}\Le^{(\beta+1)/4}\Le^{(\beta+1)/4}v_j\|_{W^{1,p}(\Omega)}
\\ &\lesssim
\| \Le^{(\beta+1)/4}\Le^{(\beta+1)/4}v_j \|_{L^p(\Omega)}
\\ &\lesssim_\eps
\|v_j\|_{L^p(\Omega)}\\
&\lesssim_\eps \|v_j\|_{L^2(\Omega)}^{1-\theta}\|v_j\|_{H^1(\Omega)}^\theta
\end{align*}
but being $v\in X_{R,T}$ we deduce
\begin{align*}
\|\nabla (-\Delta)^{-1}\Le^{(\beta+1)/4}\Le^{(\beta+1)/4}v_j\|_{L^\infty(\Omega)}\lesssim_{\eps,R}
\|v_j\|_{H^1(\Omega)}^\theta,\quad \theta=\theta(p) \in (0,1).
\end{align*}
It also holds
\begin{align*}
\|\nabla (-\Delta)^{-1} \Le^{(\beta+1)/4}\Le^{(\beta+1)/4}(v_j - v_j')\|_{L^2(\Omega)}\lesssim_\eps \|v-v'\|_{L^2(\Omega)}.
\end{align*}
We infer, via H\"older's inequality and the fact that $v\in X_{R,T}$, that 
\begin{align*}
\sum_{i,j=1}^n \int_0^t &\|T_M(v_i)-T_M(v_i')\|_{L^2(\Omega)}^2
\|\nabla (-\Delta)^{-1}\Le^{(\beta+1)/4}\Le^{(\beta+1)/4}v_j\|_{L^\infty(\Omega)}^2 \dd \tau\\
&\lesssim_{M,\eps,R}\int_0^t \|v\|_{H^1(\Omega)}^{2\theta}\|v-v'\|_{L^2(\Omega)}^{2} \dd \tau\\
&\lesssim_{M,\eps,R}t^{1-\theta} \|v-v'\|_{X_T}
\end{align*}
and so
\begin{align*}
\|u(t) &-u'(t)\|_{L^2(\Omega)}^2 + \int_0^t\|\nabla (u-u')\|_{L^2(\Omega)}\dd \tau\\ 
&\lesssim_{\kappa,\rho,M,\eps,R}
\int_0^t \|u-u'\|_{L^2(\Omega)}^2 \dd \tau +
t^{1-\theta} \|v-v'\|_{X_T}^2 .
\end{align*}
Gr\"onwall's lemma and trivial estimates lead to
\begin{align*}
\sup_{t\in [0,T]}\|u(t) &-u'(t)\|_{L^2(\Omega)}^2 + \int_0^T\|\nabla (u-u')\|_{L^2(\Omega)}\dd \tau
\lesssim_{\kappa,\rho,M,\eps,R}
e^T T^{1-\theta} \|v-v'\|_{X}^2 .
\end{align*}
Since $0<\theta<1$, choosing $T>0$ small enough yields that $F$ is a contraction on $X_{R,T}$ and therefore has a unique fixed point $u\in X_{R,T}$ which is a weak solution to \eqref{1.app}--\eqref{1.app.ic}.

\textbf{Step 3.} \emph{Uniform bounds and global well-posedness for the approximated problem.} 
We wish now to derive some bounds for the solution $u$ to \eqref{1.app}--\eqref{1.app.ic} that are uniform in the approximation parameters. For $\eta>0$ arbitrary, define
\begin{align}\label{H.M.eta}
H_{M,\eta}[u]\equiv \int_\Omega \sum_{i=1}^n\pi_i h_{M,\eta}(u_i)\dd x,\quad 
h_{M,\eta}(v)\equiv \int_0^v\int_1^s ( \eta + T_M(\sigma) )^{-1}\dd\sigma \dd s\quad \text{ for all } v\in\R_+ ,\\
\label{H.M}
H_{M}[u]\equiv \int_\Omega \sum_{i=1}^n\pi_i h_{M}(u_i)\dd x,\quad 
h_{M}(v)\equiv \int_0^v\int_1^s T_M(\sigma)^{-1}\dd\sigma \dd s\quad \text{ for all } v\in\R_+ .
\end{align}
It holds that $h_{M,\eta}'(u_i^{(M,\rho,\eps,\kappa)})\in L^2((0,T); H^1(\Omega))$ since $u_i^{(M,\rho,\eps,\kappa)}\in L^2((0,T); H^1(\Omega))$, thus we can employ $\pi_i h_{M,\eta}'(u_i^{(M,\rho,\eps,\kappa)})$ as a test function in \eqref{1.app} and obtain (via summing the resulting equations in $i$)
\begin{align*}
H_{M,\eta}&[\uMrek(t)] 
+ \kappa\sum_{i=1}^n \int_0^t\int_\Omega\frac{\pi_i |\nabla \uMrek_i|^2}{
\eta + T_M(\uMrek_i)}\dd x \dd\tau\\
&+ \kappa \int_0^t\int_\Omega\sum_{i=1}^n\pi_i g_\rho[\uMrek_i] h_{M,\eta}'(\uMrek_i)\dd x \dd\tau\\
&+\sum_{i,j=1}a_{ij}\pi_i \int_0^t\int_\Omega\frac{T_M(\uMrek_i)}{\eta+T_M(\uMrek_i)}
\nabla \uMrek_i\cdot\nabla (-\Delta)^{-1}\Le^{(\beta+1)/4}\Le^{(\beta+1)/4}\uMrek_j \dd x \dd\tau\\
=& H_{M,\eta}[u_0] 
\end{align*}
and so
\begin{align}\label{H.m.eta.1}
& H_{M,\eta}[\uMrek(t)] 
+ \kappa\sum_{i=1}^n \int_0^t\int_\Omega\frac{\pi_i |\nabla \uMrek_i|^2}{
	\eta + T_M(\uMrek_i)}\dd x d\tau\\ \nonumber
&\quad +\sum_{i,j=1}a_{ij}\pi_i \int_0^t\int_\Omega
\uMrek_i \Le^{(\beta+1)/4}\Le^{(\beta+1)/4}\uMrek_j \dd x \dd\tau\\ \nonumber
&= H_{M,\eta}[u_0] +
\sum_{i,j=1}a_{ij}\pi_i \int_0^t\int_\Omega\frac{\eta\nabla \uMrek_i}{\eta+T_M(\uMrek_i)}
\cdot\nabla (-\Delta)^{-1}\Le^{(\beta+1)/4}\Le^{(\beta+1)/4}\uMrek_j \dd x \dd\tau\\ \nonumber
&\quad -\kappa \int_0^t\int_\Omega\sum_{i=1}^n\pi_i g_\rho[\uMrek_i] h_{M,\eta}'(\uMrek_i)\dd x \dd\tau
\end{align}
If we define 
$$g_{M,\eta}(s) = \int_0^s \frac{\eta}{\eta+T_M(\sigma)}\dd\sigma $$
then the second integral on the right-hand side of \eqref{H.m.eta.1} can be rewritten as
\begin{align*}
&\sum_{i,j=1}a_{ij}\pi_i \int_0^t\int_\Omega
\nabla g_{M,\eta}(\uMrek_i)\cdot\nabla (-\Delta)^{-1}\Le^{(\beta+1)/4}\Le^{(\beta+1)/4}\uMrek_j \dd x \dd \tau\\
&= \sum_{i,j=1}a_{ij}\pi_i \int_0^t\int_\Omega
g_{M,\eta}(\uMrek_i)\Le^{(\beta+1)/4}\Le^{(\beta+1)/4}\uMrek_j \dd x \dd\tau = 
\sum_{i,j=1}a_{ij}\pi_i(J_{i,\eta,1} + J_{i,\eta,2}),
\end{align*}
with 
\begin{align*}
J_{i,\eta,1}\equiv \int_{ \{ \uMrek_i\leq\sqrt{\eta} \} }
g_{M,\eta}(\uMrek_i)\Le^{(\beta+1)/4}\Le^{(\beta+1)/4}\uMrek_j \dd x \dd \tau,\\
J_{i,\eta,2}\equiv \int_{ \{ \uMrek_i > \sqrt{\eta} \} }
g_{M,\eta}(\uMrek_i)\Le^{(\beta+1)/4}\Le^{(\beta+1)/4}\uMrek_j \dd x \dd \tau.
\end{align*}
It follows
\begin{align*}
|J_{i,\eta,1}| + |J_{i,\eta,2}|\leq \sqrt{\eta}C(M,\rho,\eps,\kappa,T)\to 0\quad\mbox{as }\eta\to 0.
\end{align*}
On the other hand, given \eqref{eq:g-ap} the third integral on the right-hand side of \eqref{H.m.eta.1} reads as
\begin{align*}
-\kappa &\int_0^t\int_\Omega\sum_{i=1}^n\pi_i g_\rho[\uMrek_i] h_{M,\eta}'(\uMrek_i)\dd x \dd \tau \\ 
&= 
-\kappa \int_0^t\int_\Omega\sum_{i=1}^n\pi_i\frac{(\uMrek_i)^2}{1+\rho |\uMrek_i|}h_{M,\eta}'(\uMrek_i) \dd x \dd \tau \\
&\quad +\kappa \int_0^t\int_\Omega\sum_{i=1}^n\pi_i
\fint_\Omega\frac{(\uMrek_i)^2}{1+\rho |\uMrek_i|} \dd y h_{M,\eta}'(\uMrek_i) \dd x \dd \tau 
\end{align*}
and given \eqref{H.M.eta} it follows
\begin{align}\label{gh}
	-\kappa &\int_0^t\int_\Omega\sum_{i=1}^n\pi_i g_\rho[\uMrek_i] h_{M,\eta}'(\uMrek_i)\dd x \dd \tau \\ \nonumber
	&=
	-\kappa \sum_{i=1}^n\pi_i\int_{ \{ (x,\tau):~\uMrek_i(x,\tau)\leq 1 \} }\frac{(\uMrek_i)^2}{1+\rho |\uMrek_i|}h_{M,\eta}'(\uMrek_i) \dd x \dd \tau \\ \nonumber
	&\quad 
	-\kappa \sum_{i=1}^n\pi_i\int_{ \{ (x,\tau):~\uMrek_i(x,\tau)> 1 \} }\frac{(\uMrek_i)^2}{1+\rho |\uMrek_i|}h_{M,\eta}'(\uMrek_i) \dd x \dd \tau \\ \nonumber
	&\quad 
	+\kappa \sum_{i=1}^n\pi_i\int_{ \{ (x,\tau):~\uMrek_i(x,\tau)\leq 1 \} }
	\fint_\Omega\frac{(\uMrek_i(y,\tau))^2}{1+\rho |\uMrek_i(y,\tau)|} \dd y~ h_{M,\eta}'(\uMrek_i(x,\tau)) \dd x \dd \tau\\ \nonumber
	&\quad +\kappa \sum_{i=1}^n\pi_i\int_{ \{ (x,\tau):~\uMrek_i(x,\tau)> 1 \} }
	\fint_\Omega\frac{(\uMrek_i(y,\tau))^2}{1+\rho |\uMrek_i(y,\tau)|} \dd y~ h_{M,\eta}'(\uMrek_i(x,\tau)) \dd x \dd \tau .
\end{align}
From \eqref{gh} we obtain
\begin{align*}
	-\kappa &\int_0^t\int_\Omega\sum_{i=1}^n\pi_i g_\rho[\uMrek_i] h_{M,\eta}'(\uMrek_i)\dd x \dd \tau \\ \nonumber
	&\leq C -\kappa \sum_{i=1}^n\pi_i\int_{ \{ (x,\tau):~\uMrek_i(x,\tau)> 1 \} }\frac{(\uMrek_i)^2}{1+\rho |\uMrek_i|}h_{M,\eta}'(\uMrek_i) \dd x \dd \tau \\ \nonumber
	&\quad +\kappa \sum_{i=1}^n\pi_i\int_{ \{ (x,\tau):~\uMrek_i(x,\tau)> 1 \} }
	\fint_\Omega\frac{(\uMrek_i(y,\tau))^2}{1+\rho |\uMrek_i(y,\tau)|} \dd y~ h_{M,\eta}'(\uMrek_i(x,\tau)) \dd x \dd \tau \\
	&\leq C - \kappa \sum_{i=1}^n\pi_i
	\int_0^t\int_{\Omega}\frac{(\uMrek_i)^2}{1+\rho |\uMrek_i|}(\log(\uMrek_i))_+ \dd x \dd \tau\\
	&\quad + \kappa |\Omega|^{-1} \sum_{i=1}^n\pi_i\int_0^t
	\left(\int_\Omega \frac{(\uMrek_i)^2}{1+\rho |\uMrek_i|} \dd y\right)
	\left(\int_\Omega [
	( \log (\uMrek_i) )_+ + M^{-1}(\uMrek_i - M)_+ ] \dd x
	\right)\dd\tau \\
	&\leq C - \kappa\sum_{i=1}^n\pi_i\int_0^t \left(\fint_\Omega (\log(\uMrek_i))_+ dy\right)
	\int_\Omega\frac{(\uMrek_i)^2}{1+\rho |\uMrek_i|}\left(
	\frac{( \log (\uMrek_i) )_+}{ \fint_\Omega (\log(\uMrek_i))_+ \dd y } - 1
	\right)\dd x\\
	&\quad + \frac{\kappa}{|\Omega|M}\sum_{i=1}^n\pi_i
	\int_0^t
	\left(\int_\Omega \frac{(\uMrek_i)^2}{1+\rho |\uMrek_i|} \dd y\right)
	\int_\Omega(\uMrek_i - M)_+ \dd x\\
	&\leq C - \kappa\sum_{i=1}^n\pi_i\int_0^t \left(\fint_\Omega (\log(\uMrek_i))_+ dy\right)
	\int_\Omega\frac{(\uMrek_i)^2}{1+\rho |\uMrek_i|}\left(
	\frac{( \log (\uMrek_i) )_+}{ \fint_\Omega (\log(\uMrek_i))_+ \dd y } - 1
	\right)_+ \dd x\\
	&\quad - \kappa\sum_{i=1}^n\pi_i\int_0^t \left(\fint_\Omega (\log(\uMrek_i))_+ dy\right)
	\int_\Omega\frac{(\uMrek_i)^2}{1+\rho |\uMrek_i|}\left(
	\frac{( \log (\uMrek_i) )_+}{ \fint_\Omega (\log(\uMrek_i))_+ \dd y } - 1
	\right)_- \dd x\\
	&\quad + \frac{\kappa}{M}\sum_{i=1}^n\pi_i
	\int_0^t
	\left(\int_\Omega (\uMrek_i)^2 \dd y\right)
	\fint_\Omega (\uMrek_i - M)_+ \dd x.
\end{align*}
However, since
$\fint_\Omega (\log(\uMrek_i))_+ \dd y\leq \fint_\Omega\uMrek_i \dd x = \fint_\Omega u_{0,i}dx<\infty$
and $s\mapsto s^2(1+\rho s)^{-1}$ is nondecreasing we deduce
\begin{align*}
	- \kappa\sum_{i=1}^n\pi_i &\int_0^t \left(\fint_\Omega (\log(\uMrek_i))_+ dy\right)
	\int_\Omega\frac{(\uMrek_i)^2}{1+\rho |\uMrek_i|}\left(
	\frac{( \log (\uMrek_i) )_+}{ \fint_\Omega (\log(\uMrek_i))_+ dy } - 1
	\right)_- \dd x
	\leq C\kappa.
\end{align*}
Finally, since the operator $\Le^{(\beta+1)/4}$ is self-adjoint and the matrix $(\pi_i a_{ij})_{i,j=1,\dots,n}$ is positive definite, we infer
	\begin{align*}
		\sum_{i,j=1}a_{ij}\pi_i \int_0^t\int_\Omega \uMrek_i
		\Le^{(\beta+1)/4}\Le^{(\beta+1)/4}\uMrek_j \dd x \dd \tau\geq 
		c_0\int_0^t\int_\Omega |\Le^{(\beta+1)/4}\uMrek|^2 \dd x \dd \tau,
	\end{align*} %
where $c_0$ is the smallest eigenvalue of the matrix $(\pi_i a_{ij})_{i,j=1,\dots,n}$.
Therefore taking the limit (inferior) $\eta\to 0$ on both sides of \eqref{H.m.eta.1} and applying Fatou's Lemma leads to
\begin{align*}
	&H_M[\uMrek(t)] + \kappa\sum_{i=1}^n \int_0^t\int_\Omega\frac{\pi_i |\nabla \uMrek_i|^2}{T_M(\uMrek_i)}\dd x \dd \tau\\
	&\quad +c_0\int_0^t\int_\Omega |\Le^{(\beta+1)/4}\uMrek|^2 \dd x \dd \tau\\ \nonumber
	&\quad +\kappa\sum_{i=1}^n\pi_i\int_0^t \left(\fint_\Omega (\log(\uMrek_i))_+ dy\right)
	\int_\Omega\frac{(\uMrek_i)^2}{1+\rho |\uMrek_i|}\left(
	\frac{( \log (\uMrek_i) )_+}{ \fint_\Omega (\log(\uMrek_i))_+ \dd y } - 1
	\right)_+ \dd x\\ \nonumber
&\leq C + \frac{\kappa}{M}\sum_{i=1}^n\pi_i
\int_0^t \left(\int_\Omega (\uMrek_i)^2 dy\right)
\fint_\Omega (\uMrek_i - M)_+ \dd x.
\end{align*}
However, since 
\begin{align*}
h_M(u)\geq -c_1 + \frac{(u-1)^2}{2M}\quad\text{ for all } u\geq 0
\end{align*}
we deduce
\begin{align}\label{ei.M.1}
	H_M[& \uMrek(t)] + \kappa\sum_{i=1}^n \int_0^t\int_\Omega\frac{\pi_i |\nabla \uMrek_i|^2}{T_M(\uMrek_i)}\dd x \dd \tau\\ \nonumber
	&+c_0\int_0^t\int_\Omega |\Le^{(\beta+1)/4}\uMrek|^2 \dd x \dd \tau\\ \nonumber
	&+\kappa\sum_{i=1}^n\pi_i\int_0^t 
	\int_\Omega\frac{(\uMrek_i)^2}{1+\rho |\uMrek_i|}\left(
	( \log (\uMrek_i) )_+ - \fint_\Omega (\log(\uMrek_i))_+ \dd y
	\right)_+ \dd x\\ \nonumber
	\leq & C(1+\kappa t) + C\kappa 
	\int_0^t H_M[\uMrek]
	\sum_{i=1}^n\fint_\Omega (\uMrek_i - M)_+ \dd x.
\end{align}
Estimate \eqref{ei.M.1} and Gr\"onwall's lemma allow us to conclude that the solution $\uMrek$ to \eqref{1.app}--\eqref{1.app.ic} exists globally in time.

%
%
%
%
%
%
%
%
%
%
%
%
%
%
%
%
%
%
%
%
\textbf{Step 4.} \emph{Limit process.}
We will carry out the limits $M \to +\infty$, $\rho \to 0^+$, $\eps \to 0^+$, $\kappa \to 0^+$ in this order. The fundamental tool for these steps will be \eqref{ei.M.1}. 

\textbf{Step 4a.} \emph{Limit $M \to +\infty$.}
Since $T_M(s)\leq s$, from \eqref{ei.M.1}, Gr\"onwall's lemma and mass conservation it follows
\begin{align}\label{limM.est.1}
\|\uMrek_i\|_{L^\infty((0,T); L^1(\Omega))} + 
\|\nabla \sqrt{\uMrek_i}\|_{L^2((0,T); L^2(\Omega))}\leq C.
\end{align}
Gagliardo--Nirenberg--Sobolev's inequality yields
\begin{align}\label{limM.est.1b}
\|\uMrek_i\|_{L^2((0,T); L^{3/2}(\Omega))}\leq C.
\end{align}
On the other hand from \eqref{ei.M.1} and the easy lower bound
\begin{align}\label{hM.lb}
h_M(u)\geq C + u\log u - u + 1\quad u\geq 1
\end{align}
we deduce
\begin{align}\label{limM.est.2}
\|\uMrek_i\log\uMrek_i\|_{L^\infty((0,T); L^1(\Omega))}\leq C.
\end{align}
It is straightforward from \eqref{limM.est.1}, \eqref{limM.est.2}, \eqref{1.app} to derive an estimate for the time derivative of $\uMrek$:
\begin{align}\label{limM.est.3}
\|\pa_t\uMrek_i\|_{L^1((0,T); W^{2,p}(\Omega)')}\leq C\quad\mbox{for some }p>3.
\end{align}
Via Aubin--Lions' lemma we deduce that, up to subsequences
\begin{align}\label{limM.cnv}
\uMrek_i\to \urek_i\quad\mbox{strongly in }L^1(Q_T).
\end{align}
Also, from \eqref{limM.est.1b} and Fatou's Lemma it follows that $\urek_i\in L^2((0,T); L^{3/2}(\Omega))$.
We will now derive a stronger convergence property.
Let $R>2$ arbitrary. Consider
\begin{align*}
\| &\uMrek_i - \urek_i\|_{L^2((0,T); L^{3/2}(\Omega))}\\
&\leq 
\|T_R(\uMrek_i) - \uMrek_i\|_{L^2((0,T); L^{3/2}(\Omega))} +
\|T_R(\urek_i) - \urek_i\|_{L^2((0,T); L^{3/2}(\Omega))} \\
&\qquad 
+\|T_R(\uMrek_i) - T_R(\urek_i)\|_{L^2((0,T); L^{3/2}(\Omega))}
\end{align*}
Clearly the third term on the right-hand side of the above inequality tends to 0 as $M\to\infty$ thanks to \eqref{limM.cnv}. This fact and Fatous Lemma yield
\begin{align}\label{limM.1}
\limsup_{M\to\infty}\| &\uMrek_i - \urek_i\|_{L^2((0,T); L^{3/2}(\Omega))}
\leq 2\limsup_{M\to\infty}\|T_R(\uMrek_i) - \uMrek_i\|_{L^2((0,T); L^{3/2}(\Omega))}.
\end{align}
However, $L^p$ interpolation leads to
\begin{align*}
\|& T_R(\uMrek_i) - \uMrek_i\|_{L^2((0,T); L^{3/2}(\Omega))}\\
&\leq 
\|T_R(\uMrek_i) - \uMrek_i\|_{L^\infty((0,T); L^{1}(\Omega))}^{1/2}
\|T_R(\uMrek_i) - \uMrek_i\|_{L^1((0,T); L^{3}(\Omega))}^{1/2}\\
&\leq 
\|T_R(\uMrek_i) - \uMrek_i\|_{L^\infty((0,T); L^{1}(\Omega))}^{1/2}
\|\sqrt{\uMrek_i}\|_{L^2((0,T); L^{6}(\Omega))}
\end{align*}
and Sobolev's inequality yields
\begin{align*}
\|& T_R(\uMrek_i) - \uMrek_i\|_{L^2((0,T); L^{3/2}(\Omega))}\\
&\leq C
\|T_R(\uMrek_i) - \uMrek_i\|_{L^\infty((0,T); L^{1}(\Omega))}^{1/2}
\|\sqrt{\uMrek_i}\|_{L^2((0,T); H^{1}(\Omega))}.
\end{align*}
From \eqref{limM.est.1} it follows
\begin{align*}
\|& T_R(\uMrek_i) - \uMrek_i\|_{L^2((0,T); L^{3/2}(\Omega))}^2\\
&\leq C
\|T_R(\uMrek_i) - \uMrek_i\|_{L^\infty((0,T); L^{1}(\Omega))}\\
&\leq \|\uMrek_i \chf{ \{ \uMrek_i > R \} } \|_{L^\infty((0,T); L^{1}(\Omega))}\\
&\leq \frac{1}{\log R}\|\uMrek_i \log\uMrek_i \|_{L^\infty((0,T); L^{1}(\Omega))}
\end{align*}
so from \eqref{limM.est.2} we obtain
\begin{align*}
\|& T_R(\uMrek_i) - \uMrek_i\|_{L^2((0,T); L^{3/2}(\Omega))}^2
\leq \frac{C}{\log R}.
\end{align*}
From the above inequality and \eqref{limM.1} we deduce
\begin{align*}
\limsup_{M\to\infty}\| &\uMrek_i - \urek_i\|_{L^2((0,T); L^{3/2}(\Omega))}
\leq \frac{C}{\sqrt{\log R}},
\end{align*}
which implies (since the left-hand side does not depend on $R$) that
\begin{align}\label{limM.cnv.2}
\uMrek_i\to \urek_i\quad\mbox{strongly in }L^2((0,T); L^{3/2}(\Omega)).
\end{align}
It follows
\begin{align*}
&\nabla\uMrek_i = 2\sqrt{\uMrek_i}\nabla\sqrt{\uMrek_i}\rightharpoonup
2\sqrt{\urek_i}\nabla\sqrt{\urek_i} = \nabla\urek_i\quad 
\mbox{weakly in }L^{6/5}(Q_T),\\
&g_\rho[\uMrek_i]\to g_\rho[\urek_i]\quad\mbox{weakly in }L^2((0,T); L^{3/2}(\Omega)),\\
&T_M(u_i^{(M,\rho,\eps,\kappa)})\nabla (-\Delta)^{-1} \Le^{(\beta+1)/4}\Le^{(\beta+1)/4}u_j^{(M,\rho,\eps,\kappa)}\\
&\qquad\qquad\rightharpoonup
u_i^{(\rho,\eps,\kappa)}\nabla (-\Delta)^{-1} \Le^{(\beta+1)/4}\Le^{(\beta+1)/4}u_j^{(\rho,\eps,\kappa)}\quad\mbox{weakly in }L^1(Q_T),
\end{align*}
where the last convergence follows from \eqref{limM.cnv.2}, the fact that the operators $\mathcal{L}_\eps^{(\beta+1)/2} : L^2(Q_T)\to L^2(Q_T)$,
$\nabla (-\Delta)^{-1} : L^2((0,T); L^{3/2}(\Omega))\to L^2((0,T); W^{1,3/2}(\Omega))$ are bounded, and Sobolev's embedding $W^{1,3/2}(\Omega)\hookrightarrow L^3(\Omega)$.
This allows us to take the limit $M\to\infty$ in \eqref{1.app}--\eqref{1.app.ic} and obtain a solution $\urek$ to
\begin{align}\label{2.app}
	\pa_t \urek_i &- \kappa\Delta \urek_i
	+\kappa g_\rho[\urek_i] \\ 
	\nonumber
	&= \Div\left(
	\sum_{j=1}^n a_{ij} \urek_i\nabla (-\Delta)^{-1} \Le^{(\beta+1)/4}\Le^{(\beta+1)/4} \urek_j\right), && (t,x) \in (0,+\infty) \times \Omega,\\
	\label{2.app.bc}
	\kappa\nu\cdot\nabla \urek_i &+ \sum_{j=1}^n a_{ij}\urek_i\nu\cdot\nabla (-\Delta)^{-1} \Le^{(\beta+1)/4}\Le^{(\beta+1)/4}\urek_j = 0, && (t,x) \in (0,+\infty)\times \partial\Omega,\\
	\label{2.app.ic}
	\urek_i(0,\cdot) &= u_{0,i}, && x \in \Omega.
\end{align}
Also, taking the limit $M\to\infty$ in \eqref{ei.M.1} and applying Fatou's Lemma leads to
\begin{align}\label{ei.rho.1}
	H[& \urek(t)] + 4\kappa\sum_{i=1}^n \int_0^t\int_\Omega
	\pi_i |\nabla \sqrt{\urek_i}|^2 \dd x \dd \tau\\ \nonumber
	&+c_0\int_0^t\int_\Omega |\Le^{(\beta+1)/4}\urek|^2 \dd x \dd \tau\\ \nonumber
	&+\kappa\sum_{i=1}^n\pi_i\int_0^t 
	\int_\Omega\frac{(\urek_i)^2}{1+\rho |\urek_i|}\left(
	( \log (\urek_i) )_+ - \fint_\Omega (\log(\urek_i))_+ \dd y
	\right)_+ \dd x \dd \tau\\ \nonumber
	\leq & C(1+\kappa t) .
\end{align}

\textbf{Step 4b.} \emph{Limit $\rho \to 0^+$.}
Estimates \eqref{limM.est.1}, \eqref{limM.est.1b}, \eqref{limM.est.2}, \eqref{limM.est.3} still hold, and by arguing in the same way as in the previous step one can prove
\begin{align}\label{limrho.cnv}
	\urek_i\to \uek_i\quad\mbox{strongly in }L^2((0,T); L^{3/2}(\Omega)).
\end{align}
The only thing we need to do is to study the convergence of the term $g_\rho[\urek_i]$. From \eqref{ei.rho.1} we easily deduce that 
$g_\rho[\urek_i]$ is bounded in $L^1(Q_T)$ and that
\begin{align*}
C(1+\kappa T) &\geq \int_0^T
\int_\Omega\frac{(\urek_i)^2}{1+\rho |\urek_i|}\left(
( \log (\urek_i) )_+ - \fint_\Omega (\log(\urek_i))_+ \dd y
\right)_+ \dd x \dd t\\
&\geq \int_0^T
\int_\Omega\frac{(\urek_i)^2}{1+\rho |\urek_i|}\left(
( \log (\urek_i) )_+ - m
\right)_+ \dd x \dd t,
\end{align*}
where $m = \sum_{j=1}^n \fint_\Omega u_{0,j}\, \mathrm d x$ (mass conservation).
Thus, we obtain
\begin{align*}
C(1+\kappa T) &\geq (\log R - m)
\int_{ \{ \urek_i > R \} }\frac{(\urek_i)^2}{1+\rho |\urek_i|}\, \mathrm d x \, \mathrm dt,
\end{align*}
which easily implies, together with \eqref{limrho.cnv}, the strong convergence
\begin{align*}
\frac{(\urek_i)^2}{1+\rho |\urek_i|}\to (\uek_i)^2\quad\mbox{strongly in }L^1(Q_T),
\end{align*}
which via the definition \eqref{eq:g-ap} of $g_\rho$ yields
\begin{align*}
g_\rho[\urek_i]\to g[\uek_i] \quad\mbox{strongly in }L^1(Q_T).
\end{align*}
with
\begin{align}\label{def.g}
g[u]\equiv u^2 - \fint_\Omega u^2 \, \mathrm d x.
\end{align}
Putting together the arguments in the previous step and the statement above we can take the limit $\rho\to 0$ in \eqref{2.app}--\eqref{2.app.ic} and obtain
\begin{align}\label{3.app}
	\pa_t \uek_i &- \kappa\Delta \uek_i
	+\kappa g[\uek_i] \\ 
	\nonumber
	&= \Div\left(
	\sum_{j=1}^n a_{ij} \uek_i\nabla (-\Delta)^{-1} \Le^{(\beta+1)/4}\Le^{(\beta+1)/4} \uek_j\right), && (t,x) \in (0,+\infty) \times \Omega,\\
	\label{3.app.bc}
	\kappa\nu\cdot\nabla \uek_i &+ \sum_{j=1}^n a_{ij}\uek_i\nu\cdot\nabla (-\Delta)^{-1} \Le^{(\beta+1)/4}\Le^{(\beta+1)/4} \uek_j = 0, && (t,x) \in (0,+\infty)\times \partial\Omega,\\
	\label{3.app.ic}
	\uek_i(0,\cdot) &= u_{0,i}, && x \in \Omega.
\end{align}
with $g$ as in \eqref{def.g}.
Also, taking the limit $\rho\to 0$ in \eqref{ei.rho.1} and applying Fatou's Lemma leads to
\begin{align}\label{ei.eps.1}
	H[& \uek(t)] + 4\kappa\sum_{i=1}^n \int_0^t\int_\Omega
	\pi_i |\nabla \sqrt{\uek_i}|^2 \dd x \dd\tau\\ \nonumber
	&+c_0\int_0^t\int_\Omega |\Le^{(\beta+1)/4}\uek|^2 \dd x \dd\tau\\ \nonumber
	&+\kappa\sum_{i=1}^n\pi_i\int_0^t 
	\int_\Omega (\uek_i)^2\left(
	( \log (\uek_i) )_+ - \fint_\Omega (\log(\uek_i))_+ \dd y
	\right)_+ \dd x \dd \tau\\ \nonumber
	\leq & C(1+\kappa t) .
\end{align}

\textbf{Step 4c.} \emph{Limit $\eps \to 0^+$.}
By arguing like in the previous steps we can deduce the strong convergence of $\uek\to \uk$. Given the $L^1(Q_T)$ bound for $(\uek)_i^2\log \uek_i$ from \eqref{ei.eps.1}, we deduce
\begin{align}\label{cnv.uek}
\uek_i\to \uk_i \quad\mbox{strongly in }L^2(Q_T).
\end{align}
Furthermore, from \eqref{ei.eps.1} we also deduce 
\begin{align}\label{cnv.uek.2}
\uek_i\rightharpoonup \uk_i \quad\mbox{weakly in }L^{4/3}((0,T); W^{1,4/3}(\Omega)),
\end{align}
given that $\nabla\sqrt{\uek_i}$ and $\sqrt{\uek_i}$ are bounded in $L^2(Q_T)$ and $L^4(Q_T)$, respectively.
Also from \eqref{ei.eps.1} we obtain
\begin{align*}
\Le^{(\beta+1)/4}\uek_i\rightharpoonup \Xi_i\quad\mbox{weakly in }L^2(Q_T).
\end{align*}
Therefore, given an arbitrary $\psi\in C^\infty_c(Q_T)$ it follows
\begin{align*}
\int_{Q_T}\psi (-\Delta)^{-1}\Le^{(\beta+1)/4}\uek_i \dd x \dd t \to 
\int_{Q_T}\psi (-\Delta)^{-1}\Xi_i \dd x \dd t.
\end{align*}
However,
\begin{align*}
\int_{Q_T} &\psi (-\Delta)^{-1}\Le^{(\beta+1)/4}\uek_i \dd x \dd t \\
&= 
\int_{Q_T}\psi ((-\Delta)^{-1}\Le^{(\beta+1)/4}\uek_i - (-\Delta)^{(\beta+1)/4-1}\uek ) \dd x \dd t
+ \int_{Q_T} \psi (-\Delta)^{(\beta+1)/4-1}( \uek_i - \uek_i )\dd x \dd t\\
&\qquad + \int_{Q_T} \psi (-\Delta)^{(\beta+1)/4-1}\uek_i \dd x \dd t.
\end{align*}
The first and second terms on the right-hand side of the above identity tend to zero because of \eqref{Le.err} and \eqref{cnv.uek}. We deduce from the arbitrarity of $\psi$ that $(-\Delta)^{-1}\Xi_i = (-\Delta)^{(\beta+1)/4-1}\uek_i$, thus yielding (given \eqref{cnv.uek.2}) $\Xi_i = (-\Delta)^{(\beta+1)/4}\uk_i$. Thus, we have proved
\begin{align}\label{cnv.Leuek}
\vek_i \equiv \Le^{(\beta+1)/4}\uek_i\rightharpoonup (-\Delta)^{(\beta+1)/4}\uek_i \equiv \vk_i\quad\mbox{weakly in }L^2(Q_T).
\end{align}
Let us now consider
\begin{align*}
\int_{Q_T} |\nabla (-\Delta)^{-1} \Le^{(\beta+1)/4}\vek_i |^2 \dd x \dd t 
&= \int_0^T\|\nabla (-\Delta)^{-1} \Le^{(\beta+1)/4}\vek_i \|_{L^2(\Omega)}^2 \dd t.
\end{align*}
%
From \eqref{Le.spec} it follows
\begin{align*}
(-\Delta)^{-1} \Le^{(\beta+1)/4}\vek_i &= 
\sum_{j=1}^\infty \lambda_j^{(\beta+1)/4-1} g_{(\beta+1)/4}(\eps\lambda_j) (\vek_i,\phi_j)_{L^2(\Omega)}(-\Delta)^{-1}\lambda_j\phi_j \\
&=\sum_{j=1}^\infty \lambda_j^{(\beta+1)/4-1} g_{(\beta+1)/4}(\eps\lambda_j) (\vek_i,\phi_j)_{L^2(\Omega)}\phi_j ,
\end{align*}
and so
\begin{align*}
&\| \nabla (-\Delta)^{-1} \Le^{(\beta+1)/4}\vek_i \|_{L^2(\Omega)}^2\\
&= \sum_{j,k=1}^\infty \lambda_j^{(\beta+1)/4-1}\lambda_k^{(\beta+1)/4-1} g_{(\beta+1)/4}(\eps\lambda_j)g_{(\beta+1)/4}(\eps\lambda_k) (\vek_i,\phi_j)_{L^2(\Omega)}(\vek_i,\phi_k)_{L^2(\Omega)}
(\nabla \phi_j,\nabla\phi_k)_{L^2(\Omega)}
\end{align*}
but since $(\nabla \phi_j,\nabla\phi_k)_{L^2(\Omega)} = \lambda_j\delta_{jk}$, we deduce
\begin{align}\label{id.norm.eps}
\int_{Q_T}& |\nabla (-\Delta)^{-1} \Le^{(\beta+1)/4}\vek_i |^2 \dd x \dd t \\ \nonumber
&=\int_0^T\sum_{j=1}^\infty \lambda_j^{(\beta-1)/2} g_\alpha(\eps\lambda_j)^2  |(\vek_i,\phi_j)_{L^2(\Omega)}|^2 \dd t
\end{align}
However, $0\leq g_\alpha\leq 1$, $0<\beta<1$ and $\lambda_j\geq \lambda_1>0$ for all $j\geq 1$, so
\begin{align*}
\int_{Q_T}& |\nabla (-\Delta)^{-1} \Le^{(\beta+1)/4}\vek_i |^2 \dd x \dd t 
\leq \lambda_1^{(\beta-1)/2}\int_0^T\sum_{j=1}^\infty |(\vek_i,\phi_j)_{L^2(\Omega)}|^2 \dd t
= \lambda_1^{(\beta-1)/2}\|\vek_i\|_{L^2(Q_T)}^2
\end{align*}
and \eqref{ei.eps.1} as well as \eqref{cnv.Leuek} yield
\begin{align}\label{est.eps.1}
\int_{Q_T}& |\nabla (-\Delta)^{-1} \Le^{(\beta+1)/4}\vek_i |^2 \dd x \dd t \leq C.
\end{align}
From \eqref{Le.err}, \eqref{cnv.Leuek}, and \eqref{est.eps.1}, we deduce
\begin{align}\label{cnv.Leuek.2}
\nabla (-\Delta)^{-1} \Le^{(\beta+1)/4}\Le^{(\beta+1)/4}\uek_i\rightharpoonup
\nabla (-\Delta)^{(\beta-1)/2}\uk_i\quad\mbox{weakly in }L^2(Q_T).
\end{align}
Thanks to \eqref{cnv.uek} and \eqref{cnv.Leuek.2}, we can pass to the limit as $\eps\to 0$ in \eqref{3.app}--\eqref{3.app.ic} and obtain
\begin{align}\label{4.app}
	&\pa_t \uk_i - \kappa\Delta \uk_i
	+\kappa g[\uk_i] = \Div\left(
	\sum_{j=1}^n a_{ij} \uk_i\nabla (-\Delta)^{(\beta-1)/2}\uk_j\right), && (t,x) \in (0,+\infty) \times \Omega,\\
	\label{4.app.bc}
	&\kappa\nu\cdot\nabla \uk_i + \sum_{j=1}^n a_{ij}\uk_i\nu\cdot\nabla (-\Delta)^{(\beta-1)/2}\uk_j = 0, && (t,x) \in (0,+\infty)\times \partial\Omega,\\
	\label{4.app.ic}
	&\uk_i(0,\cdot) = u_{0,i}, && x \in \Omega.
\end{align}
We stress the fact that \eqref{4.app}--\eqref{4.app.bc} must be intended in the weak sense:
\begin{align}\label{4.app.w}
\begin{split}
&\int_0^T\langle \partial_t \uk_i , \phi \rangle \dd t + \kappa \int_0^T\int_\Omega\nabla \uk_i\cdot\nabla\phi \dd x \dd t + \kappa\int_0^T\int_\Omega g[\uk_i]\phi \dd x \dd t \\ 
&+ 
\int_0^T\int_\Omega\sum_{j=1}^n a_{ij} \uk_i\nabla (-\Delta)^{(\beta-1)/2}\uk_j\cdot\nabla\phi \dd x \dd t = 0, \qquad\text{ for all }\phi\in L^2((0,T); H^1(\Omega)).
\end{split} 
\end{align}

Also, via \eqref{cnv.Leuek} and Fatou's Lemma, we can take the (inferior) limit in \eqref{ei.eps.1} and deduce the following estimate:
\begin{align}\label{ei.kappa.1}
	H[& \uk(t)] + 4\kappa\sum_{i=1}^n \int_0^t\int_\Omega
	\pi_i |\nabla \sqrt{\uk_i}|^2 \dd x \dd \tau\\ \nonumber
	&+c_0\int_0^t\int_\Omega |\nabla (-\Delta)^{(\beta-1)/4}\uk|^2 \dd x \dd\tau\\ 
	\nonumber
	&+\kappa\sum_{i=1}^n\pi_i\int_0^t 
	\int_\Omega (\uk_i)^2\left(
	( \log (\uk_i) )_+ - \fint_\Omega (\log(\uk_i))_+ \dd y
	\right)_+ \dd x \dd\tau\\ \nonumber
	\leq & C(1+\kappa t) ,
\end{align}
where we also used the identity
$$
\int_0^t\int_\Omega |\nabla (-\Delta)^{(\beta-1)/4}\uk|^2 \dd x \dd\tau = 
\int_0^t\int_\Omega |(-\Delta)^{(\beta+1)/4}\uk|^2 \dd x \dd\tau
$$
which is easy to verify via the spectral decomposition of the fractional Laplacian (the argument is basically the same as the one employed to derive \eqref{id.norm.eps}).

\textbf{Step 4d.} \emph{Limit $\kappa \to 0^+$.}
The uniform bound for $\nabla (-\Delta)^{(\beta-1)/4}\uk$ in $L^2(Q_T)$ coming from \eqref{ei.kappa.1}, mass conservation and Lemmata \ref{lem.spfrSob}, \ref{lem.Poi} allow us to deduce via Aubin--Lions' lemma that 
\begin{align}\label{cnv.uk}
\uk_i\to u_i\quad\mbox{strongly in }L^2(Q_T),\\
\label{cnv.uk.2}
\nabla (-\Delta)^{(\beta-1)/4}\uk_i\rightharpoonup
\nabla (-\Delta)^{(\beta-1)/4}u_i\quad\mbox{weakly in }L^2(Q_T).
\end{align}
The uniform $L^2(Q_T)$ bounds for $\sqrt{\kappa}(\uk_i)\sqrt{(\log\uk_i)_+}$,
$\sqrt{\kappa}\nabla\sqrt{\uk_i}$ coming from \eqref{ei.kappa.1} and mass conservation allow us to infer that $\kappa g[\uk_i]\to 0$, $\kappa\nabla\uk_i\to 0$ strongly in $L^1(Q_T)$. We can therefore take the limit $\kappa\to 0$ in \eqref{4.app}--\eqref{4.app.ic} and obtain a weak solution to \eqref{1}--\eqref{1.ic}.

\textbf{Step 5.} \emph{Entropy estimate.}  Finally, we note that, taking the limit $\kappa \to 0$ in the $\kappa$-approximated entropy inequality \eqref{ei.kappa.1} yields 
\begin{align*}
H[u(t)] &+c_0\int_0^t\int_\Omega \sum_{i=1}^{n} |\nabla (-\Delta)^{(\beta-1)/4}u_i(t,x)|^2 \dd x \dd\tau\le C.
\end{align*}	

This concludes the proof of Theorem \ref{thm:ex}.
\end{proof}

\section{Weak-strong uniqueness}
\label{sec:thm-wsuniq}

In order to prove the weak-strong uniqueness principle, we shall rely on the entropy structure of the system (see \eqref{en}) and use the relative entropy method. 

\begin{proof}[Proof of Theorem~\ref{thm:wsuniq}]

We shall study the time-evolution of the following ``relative entropy'' between $u$ and $v$:
\begin{align*}
H[u\vert v] = H[u] - H[v]  - \langle H'[v], u-v\rangle = \sum_{i=1}^n\pi_i\int_\Omega\left(
u_i\log\frac{u_i}{v_i} - u_i + v_i
\right)\dd x.
\end{align*}
Rigorously, we need to argue at an approximate level and then pass to the limit in the approximated entropy inequalities. However, for brevity (since the approximation argument is very similar to the one presented in Theorem \ref{thm:ex}), we do here a formal computation. That is, we differentiate the relative entropy in time to compute: 
\begin{align*}
\frac{\d}{\d t} H[u\vert v] 
&= \sum_{i=1}^n\pi_i\int_\Omega\left(
\left(\log\frac{u_i}{v_i} \right)\pa_t u_i + \left(
1 - \frac{u_i}{v_i}\right)\pa_t v_i
\right)\dd x\\
&= -\sum_{i,j=1}^n\pi_i a_{ij}\int_\Omega\left(
u_i\nabla\log\frac{u_i}{v_i}\cdot\nabla (-\Delta)^{-\frac{1-\beta}{2}} u_j
- v_i\nabla\frac{u_i}{v_i}\cdot\nabla (-\Delta)^{-\frac{1-\beta}{2}}v_j
\right)\dd x\\
&= -\sum_{i,j=1}^n\pi_i a_{ij}\int_\Omega\left(
\nabla u_i\cdot\nabla (-\Delta)^{-\frac{1-\beta}{2}} u_j
-\frac{u_i}{v_i}\nabla v_i\cdot\nabla (-\Delta)^{-\frac{1-\beta}{2}} u_j
\right.\\
&\left.\qquad\qquad 
-\nabla u_i\cdot\nabla (-\Delta)^{-\frac{1-\beta}{2}}v_j
+\frac{u_i}{v_i}\nabla v_i\cdot\nabla (-\Delta)^{-\frac{1-\beta}{2}}v_j
\right)\dd x\\
&= -\sum_{i,j=1}^n\pi_i a_{ij}\int_\Omega\left(
\nabla u_i\cdot\nabla (-\Delta)^{-\frac{1-\beta}{2}} u_j
-\nabla v_i\cdot\nabla (-\Delta)^{-\frac{1-\beta}{2}} u_j\right.\\
&\left.\qquad\qquad -\nabla u_i\cdot\nabla (-\Delta)^{-\frac{1-\beta}{2}} v_j
+\nabla v_i\cdot\nabla (-\Delta)^{-\frac{1-\beta}{2}} v_j
\right)\dd x\\
&\quad -\sum_{i,j=1}^n\pi_i a_{ij}\int_\Omega
\frac{u_i-v_i}{v_i}\nabla v_i\cdot\nabla (-\Delta)^{-\frac{1-\beta}{2}}(v_j-u_j)\dd x.
\end{align*}
At this point, we use the fact that $\pi_i a_{ij} = \pi_j a_{ji}$ to infer
\begin{align*}
\frac{\d}{\d t} H[u\vert v] &= -\sum_{i,j=1}^n\pi_i a_{ij}\int_\Omega
(-\Delta)^{-\frac{1-\beta}{4}}\nabla(u_i-v_i)\cdot
(-\Delta)^{-\frac{1-\beta}{4}}\nabla(u_j-v_j) \dd x\\
&\quad +\sum_{i,j=1}^n\pi_i a_{ij}\int_\Omega
\frac{u_i-v_i}{v_i}\nabla v_i\cdot\nabla (-\Delta)^{-\frac{1-\beta}{2}}(u_j-v_j)\dd x.
\end{align*}
Since $\pi_i a_{ij}$ is positive definite and its smallest eigenvalue is $\alpha>0$, we deduce
\begin{align*}
\frac{\d}{\d t}& H[u\vert v] + \alpha\sum_{i=1}^n\int_\Omega
|(-\Delta)^{-\frac{1-\beta}{4}}\nabla(u_i-v_i)|^2 \dd x\\
&\leq \sum_{i,j=1}^n\pi_i a_{ij}\int_\Omega
\frac{u_i-v_i}{v_i}\nabla v_i\cdot\nabla (-\Delta)^{-\frac{1-\beta}{2}}(u_j-v_j)\dd x\\
&= \sum_{i,j=1}^n\pi_i a_{ij}\int_\Omega
(-\Delta)^{-\frac{1-\beta}{4}}
\left( \frac{u_i-v_i}{v_i}\nabla v_i \right)\cdot\nabla (-\Delta)^{-\frac{1-\beta}{4}}(u_j-v_j)\dd x.
\end{align*}
Young's inequality and the boundedness of $(-\Delta)^{-\frac{1-\beta}{4}} : L^2\to L^2$ yield
\begin{align*}
\frac{\d}{\d t}H[u\vert v] &+ \frac{\alpha}{2}\sum_{i=1}^n\int_\Omega
|(-\Delta)^{-\frac{1-\beta}{4}}\nabla(u_i-v_i)|^2 \dd x
\leq C \int_\Omega \left|(-\Delta)^{-\frac{1-\beta}{4}}
\left( \frac{u_i-v_i}{v_i}\nabla v_i \right)\right|^2 \dd x.
\end{align*}
Employing Corollary \ref{Hminusr.coro} leads to
\begin{align*}
\frac{\d}{\d t}H[u\vert v] &+ \frac{\alpha}{2}\sum_{i=1}^n\int_\Omega
|(-\Delta)^{-\frac{1-\beta}{4}}\nabla(u_i-v_i)|^2 \dd x
\leq C \| (u_i-v_i) \nabla\log v_i\|_{L^p(\Omega)}^2 ,\quad 
\frac{2d}{d+1-\beta}<p.
\end{align*}
Lemma \ref{Sob.emb.spfr} leads to the following Gagliardo--Nirenberg--Sobolev's estimate
\begin{align*}
\|u_i-v_i\|_{L^{q}}\leq C
\|u_i-v_i\|_{L^1(\Omega)}^{1-\xi}
\|(-\Delta)^{-\frac{1-\beta}{4}}\nabla(u_i-v_i)\|_{L^2(\Omega)}^\xi,\quad  p<q<\frac{2d}{d-1-\beta}, \quad 
\xi = \frac{2d}{d+1+\beta}\left(1-\frac{1}{q}\right).
\end{align*}
From the above inequality as well as H\"older and Young's inequalities we get
\begin{align*}
&\| (u_i-v_i) \nabla\log v_i\|_{L^p(\Omega)}^2\\
&\leq 
\left(\int_\Omega |u_i-v_i|^{q} \dd x\right)^{2/q}
\left( \int_\Omega |\nabla\log v_i|^{pq/(q-p)} \dd x \right)^{
\frac{2(q-p)}{qp} }\\
&\leq C\left( \int_\Omega |\nabla\log v_i|^{pq/(q-p)} \dd x \right)^{\frac{2(q-p)}{qp}}
\|u_i-v_i\|_{L^1(\Omega)}^{2(1-\xi)}
\|(-\Delta)^{-\frac{1-\beta}{4}}\nabla(u_i-v_i)\|_{L^2(\Omega)}^{2\xi}\\
&\leq C\left( \int_\Omega |\nabla\log v_i|^{pq/(q-p)} \dd x \right)^{
\frac{2(q-p)}{qp(1-\xi)}	}
\|u_i-v_i\|_{L^1(\Omega)}^{2} + 
\frac{\alpha}{4}\int_\Omega
|(-\Delta)^{-\frac{1-\beta}{4}}\nabla(u_i-v_i)|^2 \dd x
\end{align*}
which implies
\begin{align*}
\frac{\d}{\d t}H[u\vert v] &+ \frac{\alpha}{4}\sum_{i=1}^n\int_\Omega
|(-\Delta)^{-\frac{1-\beta}{4}}\nabla(u_i-v_i)|^2 \dd x
\leq C\sum_{i=1}^n\|\nabla\log v_i\|_{L^{q_2}(\Omega)}^{q_1}
\|u_i-v_i\|_{L^1(\Omega)}^{2},
\end{align*}
with 
$$
q_1 = \frac{2}{1-\xi} = (d+1+\beta)\left(\frac{1+\beta-d}{2}+\frac{d}{q} \right)^{-1},\quad
q_2 = \frac{pq}{q-p}.
$$
Employing the lower bound $\frac{2d}{d+1-\beta}<p$ and performing a straightforward computation leads to 
$$\frac{d+1+\beta}{q_1} + \frac{d}{q_2} < 1,$$ 
while Csisz\'ar--Kullback--Pinsker's inequality (see, e.\,g., \cite[Theorem 1.1]{MR1801751}) allows us to deduce
\begin{align*}
\frac{\d}{\d t}H[u\vert v] &+ \frac{\alpha}{4}\sum_{i=1}^n\int_\Omega
|(-\Delta)^{-\frac{1-\beta}{4}}\nabla(u_i-v_i)|^2 \dd x
\leq C\sum_{i=1}^n\|\nabla\log v_i\|_{L^{q_2}}^{q_1}
H[u\vert v].
\end{align*}
Assumption \eqref{hp.wsuni} and Gr\"onwall's lemma imply that $H[u(t)\vert v(t)]=0$ for $t>0$, meaning that $u(t)=v(t)$ a.\,e.~in $\Omega$, for $t>0$. This concludes the proof.
\end{proof}

\section{Long-time asymptotics}
\label{sec:thm-lt}

Using the relative entropy approach of Section \ref{sec:thm-wsuniq}, we are ready to prove the convergence of $u(t,\cdot)$ to the equilibrium $u^\infty$ as $t \to +\infty$.

\begin{proof}[Proof of Theorem \ref{thm:lt}]
From \eqref{en.in} (see Theorem \ref{thm:ex}), it follows
\begin{align*}
\frac{\d}{\d t}H[u\vert u^\infty] \leq -c_0\int_\Omega\sum_{i=1}^n |\na (-\Delta)^{-\frac{1-\beta}{4}}u_i|^2 \dd x,
\end{align*}
where $H[u\vert u^\infty]$ is the relative entropy.
From Lemma \ref{lem.Poi} (with $r=(1+\beta)/2\in (0,1)$), we deduce 
\begin{align*}
\frac{\d}{\d t}H[u\vert u^\infty] \leq -C\int_\Omega\sum_{i=1}^n |u_i - u_i^\infty |^2 \dd x,
\end{align*}
which yields
\begin{align*}
\frac{\d}{\d t}H[u\vert u^\infty] \leq -C H[u\vert u^\infty],\quad t>0.
\end{align*}
Then, by Gr\"onwall's inequality, we deduce $H[u(t)\vert u^\infty]\leq e^{-Ct}H[u_0\vert u^\infty]$ for $t>0$, which  implies the strong convergence of $u(t)$ towards $u^\infty$ in $L^1(\Omega)$ with exponential rate via a  Csisz\'ar--Kullback--Pinsker's inequality (see, e.\,g., \cite[Theorem 1.1]{MR1801751}). This concludes the proof of Theorem~\ref{thm:lt}.
\end{proof}

\vspace{2mm}

\appendix

\section*{Acknowledgments}

N.~De Nitti is a member of the Gruppo Nazionale per l’Analisi Matematica, la Probabilità e le loro Applicazioni (GNAMPA) of the Istituto Nazionale di Alta Matematica (INdAM). 

Nicola Zamponi thanks the University of Augsburg for financial support via the research program “Forschungspotentiale besser nutzen!”.

We thank E.~Zuazua for suggesting the problem and for many insightful discussions. We also acknowledge the anonymous referees for their constructive comments.

\vspace{2mm}

\bibliographystyle{abbrv} 
\bibliography{CDSpectral-ref.bib}
\vfill

\end{document}